\newcommand\ett{\mathbf1}
\def\Z{{\mathbb Z}}
\def\R{{\mathbb R}}
\def\Pr{{\mathbb P}}
\def\org{\mathbf{0}}
\def\nbf{\mathbf{n}}
\def\mbf{\mathbf{m}}
\def\0{\mathbf{0}}
\def\ett{\mathbf{1}}
\def\be{\begin{equation}}
\def\ee{\end{equation}}
\def\bea{\begin{equation*}}
\def\eea{\end{equation*}}
\def\bal{\begin{aligned}}
\def\eal{\end{aligned}}
\def\eps{\varepsilon}
\DeclareMathOperator{\E}{{\mathbb E}}
\def\le{\leqslant}
\def\leq{\leqslant}
\def\ge{\geqslant}
\def\geq{\geqslant}
\newtheorem{thm}{Theorem}
\newtheorem{lma}[thm]{Lemma}
\theoremstyle{remark}
\newtheorem{preex}[thm]{Example}
\theoremstyle{definition}
\newcommand{\sss}{\scriptscriptstyle}
\newcommand{\Tk}{T^{\sss (k)}}
\newcommand{\muk}{\mu^{\sss (k)}}
\newcommand{\Hl}{\mathcal{L}_\eps}
\newcommand{\Hr}{\mathcal{R}_\eps}
\def\coex{\mathscr{C}}
\begin{document}

\title{The two-type Richardson model in the half-plane}
\author{Daniel Ahlberg\thanks{Department of Mathematics, Stockholm University; {\tt daniel.ahlberg@math.su.se}} \and Maria Deijfen\thanks{Department of Mathematics, Stockholm University; {\tt mia@math.su.se}} \and  Christopher Hoffman \thanks{Department of Mathematics, University of Washington; {\tt hoffman@math.washington.edu}} }
\date{August 2018}

\maketitle

\begin{abstract}
\noindent The two-type Richardson model describes the growth of two competing infection types on the two or higher dimensional integer lattice. For types that spread with the same intensity, it is known that there is a positive probability for infinite coexistence, while for types with different intensities, it is conjectured that infinite coexistence is not possible. In this paper we study the two-type Richardson model in the upper half-plane $\Z\times\Z_+$, and prove that coexistence of two types starting on the horizontal axis has positive probability if and only if the types have the same intensity.
\end{abstract}

\section{Introduction}

In 1998, H\"aggstr\"om and Pemantle~\cite{HP_twotype} introduced a model for competing growth on $\Z^2$ known as the two-type Richardson model. Two competing entities, here referred to as type 1 and type 2 infection, initially occupy one site each of the $\Z^2$ nearest-neighbor lattice. As time evolves each uninfected site is occupied by type $i$ at rate $\lambda_i$ times the number of type $i$ neighbors. An infected site remains in its state forever,
implying that the model indeed defines a competition scheme between the types.

Regardless of the values of the intensities, both types clearly have a positive probability of winning by surrounding the other type at an early stage. Attention hence focuses on the event $\coex$ that both types simultaneously grow to occupy infinitely many sites; this is referred to as \emph{coexistence} of the two types.  Deciding whether or not $\coex$ has positive probability is non-trivial since it cannot be achieved on any finite part of the lattice. By time-scaling and symmetry we may restrict to the case $\lambda_1=1$ and $\lambda_2=\lambda>1$. The conjecture, due to H\"aggstr\"om and Pemantle~\cite{HP_twotype}, then is that $\coex$ has positive probability if and only if $\lambda=1$. The \emph{if}-direction of the conjecture was proved in~\cite{HP_twotype}, and extended to higher dimensions independently by Garet and Marchand~\cite{GM_coex} and Hoffman~\cite{Hoff_coex}, using different methods. As for the \emph{only if}-direction, H\"aggstr\"om and Pemantle \cite{HP_absence} showed in 2000 that coexistence is possible for at most countably many values of $\lambda$. Ruling out coexistence for \emph{all} $\lambda>1$ remains a seemingly challenging open problem.

In this paper we study the analogous problem in the upper half-plane $\Z\times\Z_+=\{(x,y):y\ge0\}$ with $(0,0)$ initially occupied by type 1 and $(1,0)$ initially occupied by type 2, and show that coexistence has positive probability if and only if $\lambda=1$. That coexistence is possible for $\lambda=1$ follows from similar arguments as in the full plane, so the novelty lies in proving the \emph{only if}-direction.

\begin{thm}\label{thm:main}
Consider the two-type Richardson model on $\Z\times\Z_+$ with $(0,0)$ and $(1,0)$ initially of type 1 and 2, respectively. Then we have that $\Pr(\coex)>0$ if and only if $\lambda=1$.
\end{thm}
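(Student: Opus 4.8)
The "if" direction ($\lambda = 1 \Rightarrow \Pr(\coex) > 0$) follows from adapting the shape-theorem and ergodicity arguments of Häggström and Pemantle to the half-plane, so I focus on the "only if" direction: assuming $\lambda > 1$, show $\Pr(\coex) = 0$. The plan is to exploit the rigidity that the boundary of the half-plane forces on the competition interface. On the full plane, the difficulty is that the two occupied regions can interleave in complicated ways; in the half-plane, however, any infinite coexistence configuration must have the type-1 and type-2 regions separated by a single interface path starting near the origin and going off to infinity, staying within $\Z \times \Z_+$. I would first set up the standard coupling with a single Richardson (first-passage percolation) process: run i.i.d.\ rate-1 exponential passage times on edges, and recall that for the two-type model with intensities $1$ and $\lambda$, a site gets type $2$ essentially when the $\lambda$-scaled passage time from $(1,0)$ beats the unscaled passage time from $(0,0)$. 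Coexistence then corresponds to both passage-time functions being "winners" along infinite sequences of sites.

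The core of the argument is a geometric/large-deviations estimate for first-passage percolation in the half-plane. Let $T(u,v)$ be the passage time and let $B_t = \{v : T(\org, v) \le t\}$ be the wet region from the origin. By the half-plane shape theorem, $B_t / t \to \Ac$ for a deterministic convex set $\Ac$ (the intersection of the full-plane limit shape with the half-plane, up to boundary effects). The key point is that the limit shape is \emph{strictly convex at the point where it meets the boundary line $y = 0$} — or at least that its boundary there is not flat in a way that would permit a one-dimensional family of geodesics. I would argue that for $\lambda > 1$, if coexistence occurred, then the slowed-down type-2 ball $\lambda \Ac$ and the type-1 ball $\Ac$ would have to share infinitely many "fresh" boundary sites on the interface, which on the half-plane forces the interface to track a specific asymptotic direction $\theta^\*$ (the unique direction in which the two competing balls can balance). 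One then shows that, conditional on the history up to a large time, the probability that the interface survives one more "epoch" without one color engulfing the other decays, because the boundary of the half-plane repeatedly gives one side a linear-order advantage (the side closer to the axis near the interface can "cut off" the other). Summing a Borel–Cantelli bound over epochs gives $\Pr(\coex) = 0$.

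More concretely, I would introduce a renormalization: fix a large scale $L$, tile the relevant region near the interface into boxes of size $L$, and define a box to be "good" if the passage-time environment inside it is close to its typical behavior (a standard concentration estimate, e.g.\ via the method of bounded differences or Talagrand's inequality for FPP, gives that a box is good with probability $1 - e^{-cL}$ uniformly). On the event of coexistence, the interface passes through an infinite sequence of boxes; using the half-plane shape theorem restricted to good boxes, I would show that whenever the interface is in a good box and $\lambda > 1$, the faster type gains a definite fraction of the box's boundary sites, so after finitely many good boxes the slower type is surrounded. Since infinitely many boxes along any infinite path are good (Borel–Cantelli, as bad boxes are sparse), coexistence has probability zero. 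The role of the half-plane is crucial here: the boundary prevents the slower type from "going around," which is exactly the escape route that keeps the full-plane problem open.

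The main obstacle will be Step two: proving the quantitative statement that the faster type necessarily captures a positive-density portion of each good box's interface, i.e.\ ruling out a conspiracy in which the interface wiggles so as to keep both types alive along a measure-zero-direction "knife edge." This requires either (a) a strict-convexity / differentiability input for the half-plane limit shape at the boundary direction, which is itself not known unconditionally for general FPP but which one can try to sidestep using the \emph{specific} exponential-passage-time structure and the explicit nature of the Richardson competition, or (b) a direct combinatorial argument showing that two competing half-plane balls with a speed mismatch cannot maintain a common growing frontier — perhaps by a clever use of the reflection symmetry across $y = 0$ together with a comparison to the known one-dimensional (line) competition, where the faster type always wins. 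I expect the paper resolves this via a careful analysis of geodesics hitting the boundary line and an argument that the "competition interface" in the half-plane has a well-defined asymptotic direction, a half-plane analogue of the results on the full-plane competition interface; establishing that this direction is deterministic and that fluctuations around it are not enough to sustain coexistence when $\lambda \neq 1$ is where the real work lies.
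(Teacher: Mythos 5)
Your proposal takes a genuinely different route from the paper, and it has a real gap at the exact point you flag. You aim to track the competition interface directly, renormalize into good boxes, and argue via Borel--Cantelli that in each good box the faster type gains a definite advantage until it surrounds the slower one. But the step you yourself identify as ``the main obstacle'' --- ruling out a ``knife edge'' along which the two types balance, which you propose to handle either via strict convexity/differentiability of the limit shape at the boundary direction, or via an unspecified combinatorial argument --- is precisely the part that is not known and that you do not supply. Strict convexity or differentiability of the FPP limit shape is open in general, and nothing about exponential weights or the half-plane restriction gives it to you. Without it your claim that ``whenever the interface is in a good box and $\lambda>1$, the faster type gains a definite fraction of the box's boundary sites'' has no proof. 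There is also a secondary concern: your box events are defined by local passage-time environments, but geodesics in FPP are long-range objects, so ``good box'' as you define it does not immediately yield a local comparison between the two types, and the Borel--Cantelli sum over epochs needs more than sparsity of bad boxes because of the long-range dependence.

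The paper sidesteps both issues, and its mechanism is not the one you conjecture. Rather than analyzing geodesics hitting the boundary or proving a deterministic asymptotic direction for the interface, the paper exploits the specific initial condition: type~2 starts at $(1,0)$ on the boundary line, so if it survives it is \emph{forced} to occupy every site of the positive horizontal axis. The paper then proceeds in three steps. First (Lemma~\ref{lma:main}), a \emph{one-type} Busemann-function argument: for the half-plane FPP started from $(-n,0)$, with positive probability the origin is reached before any other site on a line tilted by $\theta+\eps$ from the vertical, simultaneously for all $n$; here $\theta$ is defined as the maximal angle a supporting line of the limit shape can make with the vertical in the coordinate direction, so the argument is deliberately robust to a flat piece of the shape and needs no strict convexity. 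Second (Lemma~\ref{lma:chance}), a shape-theorem estimate showing that once type~2 is ahead at such a tilted line, it captures a full half-strip $S_k^+$ with positive probability. Third (Lemma~\ref{lma:speed}), an angular bootstrapping argument (a half-plane adaptation of H\"aggstr\"om--Pemantle) showing that control of a thick half-strip implies type~2 eventually engulfs type~1. Crucially, because the paper's key lemma is a statement about the one-type process and uses only the convexity (not strict convexity) of the shape via the $\theta+\eps$ tilt, it never needs to control the geometry of the competition interface or its direction, which is exactly where your proposal stalls.
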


Some readers might suspect that the arguments used to prove this result could be adaptable to settle the H\"aggstr\"om-Pemantle conjecture in the full plane. This however is most likely not the case. It is known that, on the event of coexistence in the full plane, the speed of the growth is determined by the weaker type; see e.g.\ \cite[Proposition 2.2]{HP_absence}. This means that, in order not to grow too fast, the stronger type must survive by maintaining a meandering path surrounded by the weaker type. In fact, it can be shown that the fraction of the infected sites occupied by the stronger type is vanishing; see \cite{GM_invisible}. The crucial point in our half-plane argument is that infinite survival for the stronger type implies that it must occupy all sites along the positive horizontal axis. We use this to show that it will thereby grow fast enough to eventually surround the weaker type. Note that the role of the initial configuration is important for this argument. We have not been able to adapt the argument to rule out coexistence in the half-plane when the initial position of the stronger type is not connected to the horizontal axis. Indeed, working with general initial configurations seems to make the problem as hard as in the full plane. We remark that, in the full plane, it is shown in \cite{initial} that the initial configuration is irrelevant for the possibility of infinite coexistence, but that argument does not apply here.

One way of constructing the two-type process is by independently assigning a unit exponential random weight $\tau(e)$ to each nearest-neighbor edge $e$ of the lattice. The time required for type 1 to traverse an edge $e$ is then given by the associated weight $\tau(e)$, and the time for type 2 is $\lambda^{-1}\tau(e)$. Indeed, this construction provides a coupling of the two-type models for all $\lambda\ge1$ simultaneously. The curious partial result of~\cite{HP_absence} is derived based on this coupling by showing that, in the probability measure underlying the coupling, there is almost surely at most one value of $\lambda$ for which coexistence may occur. That coexistence occurs with positive probability for at most countably many $\lambda\ge1$ is an easy consequence of this.

There are a number of proofs of coexistence for the case when the types have the same intensities, and (at least some of) these arguments can be adapted to prove the \emph{if}-direction of Theorem \ref{thm:main}. We shall however offer an alternative proof, since it is a simple by-product of the arguments required to prove the \emph{only if}-direction of the theorem. To rule out coexistence for $\lambda>1$, we shall develop an argument inspired by the work of Blair-Stahn~\cite{BS_thesis}, and that incorporates elements of Busemann functions introduced by Hoffman~\cite{Hoff_coex,Hoff_geodesics}. Nevertheless, the proof will be a self-contained and elementary deduction from standard results in first-passage percolation.

The two-type Richardson model can be viewed as a two-type version of first passage percolation with exponential edge weights. One of the most fundamental results for first passage percolation is the shape theorem, asserting that the infected set at time $t$ converges on the scale $t^{-1}$ to a deterministic convex set $A$. In order to describe the structure of the proof of Theorem \ref{thm:main}, let $\theta$ denote the maximal angle between any supporting line of $A$ in the first coordinate direction and the vertical supporting line in the same coordinate direction; see Figure \ref{fig1} (left picture). Then $\theta$ equals zero in case the shape is differentiable in the coordinate directions, and $\theta$ is at most $\pi/4$, which occurs if the shape is a diamond. Given $\eps>0$ and $n\in\Z$, we partition the upper half-plane $\Z\times\Z_+$ into two regions $\Hl(n)$ and $\Hr(n)$ as follows: Consider the semi-infinite line through $(n-1/2,0)$ with angle $\theta+\eps$ to the vertical line through the same point (see Figure \ref{fig1}, right picture), and write $\Hl(n)$ for the part of the upper half-plane to the left of this line, excluding points on the line, and $\Hr(n)$ for the part to the right of the line, including points on the line. Finally, define the strips $S_k:=\{(x,y)\in \Z^2:0\leq y\leq k\}$ and $S_k^+=\{(x,y)\in \Z^2: x\geq 0, 0\leq y\leq k\}$.

The proof of the \emph{only if}-direction of Theorem \ref{thm:main} can roughly be divided into three steps, where the first one may be considered the most fundamental:

\begin{description}
\item[Step (i)] Show that, for every $\lambda\geq 1$ and $\eps>0$, if type 2 survives indefinitely, then almost surely type 2 reaches $\Hr(n)$ before type 1 for infinitely many $n\ge1$.
\item[Step (ii)] Show that, for every $\lambda>1$ there exists $\eps>0$ such that, if type 2 comes first to $\Hr(n)$, then for each each $k\ge1$ there is a positive probability (uniform in $n$) that type 2 occupies all vertices in $S_k\cap \Hr(n)$.
\item[Step (iii)] Show that, if type 2 conquers all but finitely many vertices in $S_k^+$ for $k$ large, then it will eventually almost surely defeat type 1.
\end{description}

Combining steps~(i) and~(ii) (or in fact a slight rephrasing of these claims) one obtains that, if type 2 survives indefinitely, then for all $k\geq 1$ it will almost surely conquer all but finitely many sites in the strip $S_k^+$ along the horizontal axis. According to step~(iii), this means that type 1 will eventually become surrounded by type~2, ruling out coexistence.

\begin{figure}[htbp]
\begin{center}
\begin{tikzpicture}[scale=.5]
\def\r{4}
\filldraw[draw=black,fill=gray!40!white] (2,0) -- ({4*cos(45)-2},{4*sin(45)}) -- (-2,4) -- ({4*cos(135)-2},{4*sin(135)}) -- (-6,0) -- cycle;
\draw[->] (-7,0) -- (2.5,0);
\draw[->] (-2,-.5) -- (-2,4.5);
\draw (2,0) -- (2,4);
\draw (2,2) arc [start angle=90, end angle=112.5, radius=2];
\draw (2,2) node[anchor=south east] {$\theta$};
\fill[fill=gray!40!white] (12,0) -- ({12+4*cos(67.5)/sin(67.5)},4) -- (16,4) -- (16,0) -- cycle;
\draw[->] (7,0) -- (16.5,0);
\draw[->] (8,-.5) -- (8,4.5);
\draw[dashed] (12,0) -- (12,4);
\draw (12,2) arc [start angle=90, end angle=67.5, radius=2];
\draw (12,0) -- ({12+4*cos(67.5)/sin(67.5)},4);
\draw (12,0) node[anchor=north] {$(n,0)$};
\draw (13.25,2) node[anchor=south east] {$\scriptscriptstyle{\theta+\varepsilon}$};
\draw (12,1.5) node[anchor=south east] {$\mathcal{L}_\varepsilon(n)$};
\draw (16,1.5) node[anchor=south east] {$\mathcal{R}_\varepsilon(n)$};
\end{tikzpicture}
\end{center}
\caption{Illustration of $\theta$ and the regions $\Hl(n)$ and $\Hr(n)$ in the case that the shape is an octagon. The shape and the region $\Hr(n)$ are shaded.}
\label{fig1}
\end{figure}
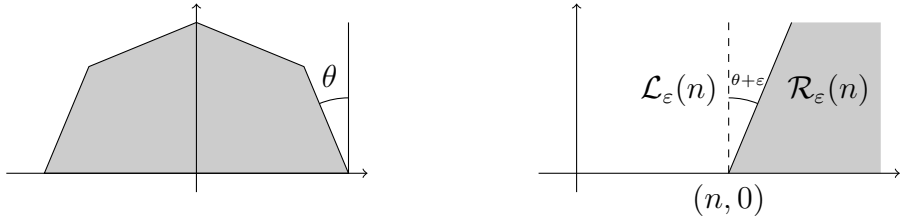

The angle $\theta$ used to define the region $\Hr(n)$ can be motivated as follows: On one hand the claim in step~(ii), which will be a consequence of the shape theorem, cannot hold for any angle larger than $\theta$. On the other hand, while the claim in step~(i) certainly could be correct also for angles smaller than $\theta$ (assuming that $\theta>0$), proving such a thing would require detailed understanding of the structure of infinite one-sided geodesics in the half-plane setting. The information needed would go beyond our current understanding for the analogous objects in the full-plane. Of course, since we believe that the shape is differentiable (at least in coordinate directions) we consequently believe that $\theta=0$, and in this case we cannot do better that having $\Hr(n)$ defined as an $\varepsilon$-tilted vertical line.

The rest of the paper is organized so that relevant background on one-type first passage percolation is given in Section 2. In Section 3 we use Busemann functions to control the evolution of the one-type process to obtain a statement that will establish step (i). Section 4 is devoted to step (ii), which is essentially a consequence of the shape theorem. Finally, the proof of Theorem \ref{thm:main} is completed in Section 5, where step (iii) is established by an adaption of an argument from~\cite{HP_absence}.

\section{Preliminaries}

In standard first passage percolation each edge $e$ of some underlying graph is independently equipped with a non-negative random variable $\tau(e)$ from some common distribution. Throughout this paper, we shall assume that the underlying graph is the upper half-plane $\Z\times\Z_+$, equipped with edges between nearest-neighbors, and that the weights $\{\tau(e)\}$ are unit exponentials. Note that $\{\lambda^{-1}\tau(e)\}$ are then exponentials with parameter $\lambda$. Given a path $\Gamma$, we let $T_\lambda(\Gamma):=\sum_{e\in\Gamma}\lambda^{-1}\tau(e)$ and define the \emph{passage time} between two sets $\Phi,\Psi\subset\Z\times\Z_+$ in the environment $\{\lambda^{-1}\tau(e)\}$ as
$$
T_\lambda(\Phi,\Psi):=\inf\big\{T_\lambda(\Gamma):\Gamma \mbox{ is a path in $\Z\times\Z_+$ connecting $\Phi$ to $\Psi$}\big\},\\
$$
To simplify the notation, we write $T_1(\Gamma)=T(\Gamma)$, $T_1(\Phi,\Psi)=T(\Phi,\Psi)$, and $T_\lambda(x,y)$ for the passage time between $\{x\}$ and $\{y\}$ for $x,y\in\mathbb{Z}^2$. It is immediate from the construction that $T_\lambda(\Phi,\Psi)=\lambda^{-1} T(\Phi,\Psi)$ for all $\lambda\ge1$.

The above construction gives rise to a simultaneous coupling of the two-type processes for all $\lambda\geq 1$, where type~1 requires time $\tau(e)$ to traverse an edge $e$ while type~2 requires time $\lambda^{-1}\tau(e)$. The passage time $T(\mathbf{0},z)$ then denotes the time at which type~1 arrives at the site $z$, unless $z$ is already reached by type 2 by then, and $T_\lambda(\mathbf{1},z)$ similarly denotes the time it would take type~2 to reach $z$, unless impeded by type~1 along the way.\footnote{Throughout the paper, we shall let bold letters like $\nbf$ be short for the horizontal vectors $(n,0)$.} In the case that $\lambda=1$, whether or not a site $z$ is eventually occupied by type~2 can be read out directly from $T$; it will in the case that $T(\mathbf{1},z)<T(\mathbf{0},z)$. Understanding the evolution in the two-type Richardson model thus leads us to recall some basic results for one-type first-passage percolation. Due to the relation $T_\lambda(x,y)=\lambda^{-1}T(x,y)$, we focus in the remainder of this section on the case $\lambda=1$; corresponding results for $\lambda>1$ are obtained by a simple scaling argument. 

Although first passage percolation in half-planes has been studied before, e.g.\ in~\cite{Ahl15,ADH15,WW98}, the vast majority of the literature is concerned with the two and higher dimensional nearest-neighbor lattices. It will be convenient to survey some of the results here. In analogy with the notation in the half-plane, we shall denote by $\overline T(\Phi,\Psi)$ the passage time between the two sets $\Phi,\Psi\subset\Z^2$, where the infimum is now taken over paths in $\Z^2$ connecting $\Phi$ and $\Psi$.

A first crucial observation is that $\overline T$ defines a metric on $\Z^2$. In particular, it is subadditive in the sense that
$$
\overline T(x,y)\leq \overline T(x,z)+\overline T(z,y)\quad\mbox{for all }x,y,z\in\Z^2.
$$
Using subadditive ergodic theory \cite{K68,L85}, one can establish the existence of a time constant $\mu\in(0,\infty)$ specifying the asymptotic inverse speed of the growth along the axes. Specifically, we have that
$$
\lim_{n\to\infty}\frac{\overline T(\org,\nbf)}{n}=\mu\quad \mbox{almost surely and in }L^1.
$$
This can be extended to an arbitrary direction in the first octant, and hence by symmetry of $\Z^2$, to any arbitrary direction: For $\alpha\in[0,\pi/4]$, let $u_\alpha$ denote a unit vector with angle $\alpha$ to the $x$-axis, that is, $u_\alpha=(\cos\alpha,\sin\alpha)$. Also, for $x,y\in\R^2$, define $\overline T(x,y):=\overline T(x',y')$, where $x'$ and $y'$ are the points in $\Z^2$ closest to $x$ and $y$, respectively. Then there exists a directional time constant $\mu_\alpha\in(0,\infty)$ such that
\begin{equation}\label{dir_mu}
\lim_{n\to\infty}\frac{\overline T(\org,nu_\alpha)}{n}=\mu_{\alpha}\quad \mbox{almost surely and in }L^1.
\end{equation}
By definition, we have $\mu_0=\mu$. We remark that passage times to lines rather than single points obey the same asymptotics. For instance, with $\bar{\ell}_\alpha(n)$ denoting the straight line through $nu_\alpha$ with normal vector $u_\alpha$, we have that $\frac1n\overline T(\org,\bar{\ell}_\alpha(n))$ converges to $\mu_\alpha$ almost surely. This can be seen as a consequence of the fundamental shape theorem, which in its first version dates back to the work of Richardson~\cite{R73}.

Since $\overline T$ defines a (random) metric on $\Z^2$ it is natural to investigate the shape of large balls in this metric. The shape theorem~\cite{Kes73,R73} states that the set of sites that can be reached from the origin within time $t$ converges almost surely on the scale $t^{-1}$ to a deterministic shape $A$, that is, with probability one, we have for every $\eps>0$ that $W(t):=\{x\in\R^2:\overline T(\org,x)\leq t\}$ satisfies
$$
(1-\eps)A\subset \frac{W(t)}{t}\subset (1+\eps)A \quad\mbox{for all large }t.
$$
The asymptotic shape $A$ can be characterized as the unit ball in the norm defined by $\mu(x)=\lim_{n\to\infty}\frac1n\overline T(\org,nx)$ for $x\in\R^2$. It is thus known to be compact and convex, with non-empty interior, and it inherits all symmetries of $\Z^2$. Apart from this, very little is known about the properties of the shape. It has been studied by aid of simulations in \cite{AlmDeijfen15}, where the results indicate that it is close to, but not identical to, a Euclidean disk. We remark that there is no theoretical support for $A$ being a Euclidean disk, and in large dimension it is known not to be a Euclidean ball.

When restricting the growth to a strip $S_k:=\{(x,y)\in \Z^2:0\leq y\leq k\}$ for some $k\ge1$, the speed of progression decreases. However, the thicker the strip, the smaller is the effect. To be precise, let $\Tk(\Phi,\Psi)$ denote the passage time between $\Phi\subset S_k$ and $\Psi\subset S_k$, where the infimum is taken over paths $\Gamma\subset S_k$ connecting $\Phi$ and $\Psi$. Again, the subadditive ergodic theorem shows that $\frac1n\Tk(\org,\nbf)$ converges (almost surely and in $L^1$) to some constant $\muk\in(0,\infty)$. Moreover,
\begin{equation}\label{mu_strip}
\muk\searrow\mu\quad\mbox{as }k\to\infty;
\end{equation}
see e.g.~\cite[Proposition 8]{Ahl15}. A similar statement holds for directions other than the axes directions. As a consequence, one can show that a shape theorem holds also for first passage percolation in the upper half-plane $\Z\times\Z_+$, and that the asymptotic shape in this case is the half-plane restriction of the shape $A$ arising in the full-plane growth; see \cite[Theorem 1]{Ahl15}. We shall occasionally need the following stronger form of this half-plane shape theorem, which is a consequence e.g.\ of~\cite[Proposition~15]{Ahl15}: For every $\eps>0$ we have, almost surely, for all $y$ and all but finitely many $z$ in $\Z\times\Z_+$ that
\begin{equation}\label{eq:better_shape}
\big|T(y,z)-\mu(z-y)\big|<\eps\max\{|z|,|z-y|\},
\end{equation}
where $\mu$ is the time constant as determined by $\overline T$.

\section{A one-type lemma}

The aim of this section is to take the first and most fundamental step towards a proof of our main theorem. It will be crucial for ruling out coexistence in the case when $\lambda>1$, but we will use it also to give a short proof of coexistence in the case when $\lambda=1$. The result is a statement for the one-type process on $\Z\times\Z_+$.

\begin{lma}\label{lma:main}
For every $\eps>0$ there exists $\gamma>0$ such that
$$
\Pr\Big(T(-\nbf,\org)<T(-\nbf,\Hr(0)\backslash \{\org\})\text{ for all }n\ge1\Big)>\gamma.
$$
\end{lma}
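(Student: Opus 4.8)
The goal is to show that with probability bounded away from zero, the origin is reached from $-\nbf$ strictly before the rest of $\Hr(0)$, simultaneously for every $n\ge 1$. Equivalently, for every $n$ the geodesic from $-\nbf$ to the half-plane region $\Hr(0)$ should enter that region precisely at $\org$. The plan is to reformulate this in terms of Busemann-type functions and to exploit that the line defining $\Hr(0)$ makes angle $\theta+\eps$ with the vertical, so that it lies strictly ``outside'' the cone of directions spanned by the asymptotic shape near the first coordinate direction.

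\emph{Step 1: Pass to a Busemann function along the negative axis.} Using the half-plane shape theorem in the strong form~\eqref{eq:better_shape}, together with subadditivity of $T$, I would argue that the limits
\[
B(\x) := \lim_{n\to\infty}\big(T(-\nbf,\x) - T(-\nbf,\org)\big)
\]
exist almost surely along a subsequence (or along the full sequence, after passing to a limit point in the compact space of Busemann functions in the spirit of Hoffman~\cite{Hoff_coex,Hoff_geodesics}), and define a function on $\Z\times\Z_+$ with $B(\org)=0$. The quantity $B(\x)$ is, heuristically, ``the asymptotic delay of $\x$ relative to $\org$ as seen from far out along the negative $x$-axis.'' The event in the lemma will follow once I control the sign of $B$ on $\Hr(0)\setminus\{\org\}$: if $B(\x)>0$ for all such $\x$ then for large $n$ the geodesic from $-\nbf$ enters $\Hr(0)$ at $\org$, and the finitely many small $n$ can be absorbed into the constant $\gamma$ by a separate positive-probability event.

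\emph{Step 2: Geometry of the shape forces the right sign.} Here is where the angle $\theta+\eps$ enters. A point $\x\in\Hr(0)\setminus\{\org\}$ lies to the right of the line through $(-1/2,0)$ making angle $\theta+\eps$ with the vertical; combined with the shape theorem, the ``ideal'' passage time from a far point $-\nbf$ to $\x$ exceeds that to $\org$ because moving from $\org$ to $\x$ requires a displacement with a component that the shape penalizes — the defining property of $\theta$ is exactly that any supporting line of $A$ in the first coordinate direction makes angle at most $\theta$ with the vertical, so an $(\theta+\eps)$-tilted line is strictly steeper than every such supporting line. I would make this quantitative: there is $\delta=\delta(\eps)>0$ so that, deterministically from the shape, $\mu(\x+\nbf) - \mu(\org+\nbf)\ge \delta |\x|$ for all large $n$ and all $\x\in\Hr(0)$ with $|\x|$ large; then~\eqref{eq:better_shape} upgrades this to $B(\x)\ge \tfrac{\delta}{2}|\x|>0$ for all but finitely many $\x\in\Hr(0)$. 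The remaining finitely many $\x$ near the origin with $B(\x)\le 0$ are handled by noting that the first few edge weights out of $\org$ can be made atypically small and the first edge weights into those nearby vertices atypically large — a positive-probability local event that forces $B(\x)>0$ for those too, and which can be taken independent of the far-field behaviour.

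\emph{Step 3: Combine.} On the intersection of the (positive-probability) local event from Step 2 with the (probability-one) event that $B$ exists and has the right sign in the far field, every $\x\in\Hr(0)\setminus\{\org\}$ satisfies $B(\x)>0$, hence for each $n$ the $(-\nbf)$-to-$\Hr(0)$ geodesic uses $\org$ as its entry point; intersecting further with the positive-probability event handling small $n$ directly gives the bound $\gamma>0$. The main obstacle I anticipate is Step 1 — establishing existence and measurability of the Busemann function $B$ in the half-plane and showing it genuinely controls the finite-$n$ geodesics uniformly, rather than only in an almost-everywhere-in-$\x$ sense; this is where the half-plane analogues of Hoffman's full-plane results, and the strong shape estimate~\eqref{eq:better_shape}, have to be invoked with care. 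The geometry in Step 2 is conceptually the heart of why $\theta+\eps$ (and not a smaller angle) is the right choice, but once the shape's supporting-line structure is pinned down it should be routine.
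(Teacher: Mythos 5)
Your proposal has the right ingredients (Busemann-type functions from $-\nbf$, the geometry of the shape near the first coordinate direction, a local modification to fix finitely many sites), and in that sense it points in the same direction as the paper. However, as written it contains a genuine gap that the paper's actual argument is specifically designed to circumvent.

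The statement requires $T(-\nbf,\org)<T(-\nbf,\Hr(0)\setminus\{\org\})$ \emph{for all} $n\geq1$. Your Step 1 constructs a (subsequential) Busemann limit $B(\x)=\lim_n\big(T(-\nbf,\x)-T(-\nbf,\org)\big)$ and deduces the event for ``large $n$'', deferring ``small $n$'' to a local event. But $B(\x)>0$ only says that, for each \emph{fixed} $\x$, the sign is eventually right for $n\geq n_0(\x)$; the threshold $n_0(\x)$ is not uniform over the infinitely many $\x\in\Hr(0)$, so there is no dichotomy into ``finitely many small $n$'' plus ``the rest''. The paper closes this gap with a monotonicity observation peculiar to the half-plane: for axis points, $n\mapsto B_n(\org,\mbf):=T(-\nbf,\org)-T(-\nbf,\mbf)$ is almost surely nondecreasing (this follows from a planar crossing argument for geodesics between boundary points), so $B_n(\org,\mbf)\leq B(\org,\mbf)$ for \emph{every} $n$. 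Combined with the decomposition $B_n(\org,v)=B_n(\org,\mbf)+B_n(\mbf,v)$ and the crude bound $B_n(\mbf,v)\leq T(\mbf,v)$, which is $n$-independent, one gets a uniform-in-$n$ estimate (Lemma~\ref{lma:negative}). Your sketch does not contain this monotonicity, and without it the passage from a limiting Busemann function to control of every finite $n$ is unjustified.

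A second, related issue sits in your Step 2. You propose to derive $B(\x)\geq\tfrac{\delta}{2}|\x|$ from the deterministic inequality $\mu(\x+\nbf)-\mu(\nbf)\geq\delta|\x|$ via~\eqref{eq:better_shape}. But the shape-theorem error for $T(-\nbf,\x)$ is of order $\eps|\x+\nbf|\approx\eps n$, which for fixed $\x$ and large $n$ swamps the $\delta|\x|$ gain; the shape theorem alone does not give pointwise control of $B$. The paper avoids this mismatch by obtaining the asymptotics of $B(\org,\mbf)$ along the axis from the ergodic theorem (Lemma~\ref{lma:Busemann}), not from the shape theorem, and applies the shape theorem only to $T(\mbf,v)$ with $m=m(v)\asymp|v|$, where the error $\eps|v-\mbf|$ is genuinely of order $|v|$. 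Finally, your local-modification step is more of a gesture than a construction; the paper's proof builds an explicit blocking path $\Gamma$ into $\partial\Hr(0)$, bounds $T(\Gamma)$, chooses a height $\ell'$ beyond which $T(-\nbf,U_{\ell'})$ is uniformly large using Lemma~\ref{lma:negative}, and resamples the edge set $\Lambda_{\ell'}$; that explicit geometry is what makes the modification rigorous and compatible with the ``for all $n$'' requirement. You would need to supply the monotonicity and the axis/ergodic-theorem step to turn your outline into a correct proof.
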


Key to the proof of the lemma will be the notion of Busemann functions. Define, for all $n\ge1$ and sites $u,v$ in the half-plane, the Busemann-like function
$$
B_n(u,v):=T(-\nbf,u)-T(-\nbf,v).
$$
Lemma~\ref{lma:main} can be rephrased to say that with positive probability $B_n(\org,v)<0$ for all $v\in\Hr(0)\setminus\{\org\}$ and $n\ge1$. We shall first show that, almost surely, $B_n(\org,v)<0$ may fail for some $n$ for at most finitely many $v$ (Lemma \ref{lma:negative}). A local modification argument will then show that with positive probability it does not.

A key observation is that, for fixed $m\ge1$, the sequence $\{B_n(\org,\mbf)\}_{n\ge1}$ is almost surely increasing. The limit
$$
B(\org,\mbf):=\lim_{n\to\infty}B_n(\org,\mbf)
$$
hence exists almost surely. Indeed, this turns out to be true for all $u$ and $v$, see~\cite{ADH15}, but we shall not need this fact. Instead, we shall make use of the following asymptotic property.

\begin{lma}\label{lma:Busemann}
For all $m\ge1$, we have that $\E[B(\org,\mbf)]=-\mu\cdot m$, and
$$
\lim_{m\to\infty}\frac1mB(\org,{\bf m})=-\mu\quad\text{almost surely}.
$$
\end{lma}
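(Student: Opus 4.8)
The plan is to establish both assertions from the basic properties of $B_n(\org,\mbf)$ recorded above, namely that $B_n(\org,\mbf)=T(-\nbf,\org)-T(-\nbf,\mbf)$ is monotone increasing in $n$ and converges to $B(\org,\mbf)$, together with the half-plane shape theorem in the strong form \eqref{eq:better_shape}. The identity $\E[B(\org,\mbf)]=-\mu\cdot m$ is the analytic heart of the argument, and I expect it to be the main obstacle, since it asserts that no mass is lost in the limit — i.e.\ that one can interchange limit and expectation.

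First I would identify the telescoping structure: by subadditivity (the triangle inequality for $\overline T$ restricted to the half-plane, which still holds) one has $T(-\nbf,\org)\le T(-\nbf,\mbf)+T(\mbf,\org)$ and also $T(-\nbf,\mbf)\le T(-\nbf,\org)+T(\org,\mbf)$, so $|B_n(\org,\mbf)|\le \max\{T(\org,\mbf),T(\mbf,\org)\}$, which has finite expectation uniformly in $n$; hence $\{B_n(\org,\mbf)\}_n$ is uniformly integrable and $\E[B(\org,\mbf)]=\lim_n \E[B_n(\org,\mbf)]$. It therefore suffices to show $\E[B_n(\org,\mbf)]\to -\mu\cdot m$. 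Now $\E[B_n(\org,\mbf)]=\E[T(-\nbf,\org)]-\E[T(-\nbf,\mbf)]$, and by \eqref{dir_mu} (its half-plane analogue, which follows from the half-plane shape theorem, \cite[Theorem 1]{Ahl15}) both $\frac1n\E[T(-\nbf,\org)]$ and $\frac1n\E[T(-\nbf,\mbf)]$ tend to $\mu$; but this only gives $\E[B_n(\org,\mbf)]=o(n)$, not the exact limit. The correct route is to use \eqref{eq:better_shape}: almost surely, for all but finitely many $z$ in the half-plane, $|T(-\nbf,z)-\mu(z+\nbf)|<\eps\max\{|z|,|z+\nbf|\}$ once $n$ is large. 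Applying this with $z=\org$ and $z=\mbf$ (both fixed), and noting $\mu(\mbf+\nbf)-\mu(\org+\nbf)=\mu\cdot(m+n)-\mu\cdot n=\mu\cdot m$ because $\mu$ is a norm and these points are colinear on the positive $x$-axis, we get $|B_n(\org,\mbf)+\mu\cdot m|\le 2\eps n$... which is again too weak. The fix is that \eqref{eq:better_shape} is stated with the error $\eps\max\{|z|,|z-y|\}$ where here $y=-\nbf$, $z\in\{\org,\mbf\}$, so $|z-y|\sim n$; one instead wants a version giving a $\max\{|z|,\text{const}\}$-type bound, or — more robustly — one argues directly: $B_n(\org,\mbf)=-[T(-\nbf,\mbf)-T(-\nbf,\org)]$ and uses a \emph{reversed} triangle inequality between the two geodesics from $-\nbf$, controlling the difference by the passage time of a short connecting path near $\org$. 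Concretely, if $\Gamma_n$ is the geodesic from $-\nbf$ to $\org$, then $T(-\nbf,\mbf)\le T(-\nbf,\org)+T(\org,\mbf)$ gives $B_n(\org,\mbf)\ge -T(\org,\mbf)$; for the matching upper bound one uses that the geodesic to $\mbf$ must pass within bounded distance of the line $x=0$, providing a point $w$ with $T(-\nbf,\org)\le T(-\nbf,w)+T(w,\org)$, hence $B_n(\org,\mbf)\le T(w,\mbf)-T(w,\org)$; letting $n\to\infty$ and using that $B(\org,\mbf)$ is additive along the axis — which follows from $B_n(\org,\mbf)=B_n(\org,\ett)+B_n(\ett,\mathbf{2})+\cdots$ and monotone convergence of each term — reduces everything to computing $\E[B(\org,\ett)]$, and translation covariance in the horizontal direction forces $\E[B(\org,\ett)]=-\mu$ by a standard argument (the $B(j\ett,(j{+}1)\ett)$ are identically distributed, their partial sums are $B(\org,\mbf)$, and Cesàro plus \eqref{dir_mu} pins the mean).

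For the almost-sure statement $\frac1m B(\org,\mbf)\to-\mu$, I would write $B(\org,\mbf)=\sum_{j=0}^{m-1} B(j\ett,(j{+}1)\ett)$ using the additivity established above; the increments $\xi_j:=B(j\ett,(j{+}1)\ett)$ form a stationary (and, one checks, ergodic) sequence under the horizontal shift, each with mean $-\mu$ and finite first moment, so Birkhoff's ergodic theorem gives $\frac1m\sum_{j<m}\xi_j\to -\mu$ almost surely. Ergodicity of the shift on the edge-weight field, restricted to observables of $B$, is the routine verification here; alternatively one can bypass ergodicity by sandwiching $B(\org,\mbf)$ between $-T(\org,\mbf)$ and the (almost surely attained) connecting bound above and invoking \eqref{dir_mu} directly, at the cost of a slightly more hands-on argument.

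The step I expect to be genuinely delicate is the uniform integrability / interchange of limits giving the \emph{exact} constant $-\mu\cdot m$ rather than merely an $o(m)$ or $O(m)$ bound: the monotone convergence $B_n\uparrow B$ makes the limit easy to take, but pinning the expectation requires either the strong shape estimate \eqref{eq:better_shape} applied carefully (with the connecting-path trick to keep the error term bounded independently of $n$) or the additivity-plus-stationarity reduction to the single increment $B(\org,\ett)$. I would organize the write-up around the latter: prove additivity of $B$ along the axis, identify $\E[B(\org,\ett)]$ via a Cesàro argument against \eqref{dir_mu}, and then read off both conclusions.
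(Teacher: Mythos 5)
Your proposal, after discarding the dead ends in the middle of the write-up, arrives at exactly the paper's argument: use additivity to write $B(\org,\mbf)$ as a telescoping sum of the stationary increments $B(\mathbf{j},\mathbf{j+1})$, invoke the ergodic theorem for the almost-sure limit, and identify $\E[B(\org,\ett)]=-\mu$ by Ces\`aro-averaging $B_j(\org,\ett)$ over the index $j$, interchanging with expectation via dominated convergence (using $|B_j(\org,\ett)|\le T(\org,\ett)$), and then telescoping against the $L^1$ time constant after a horizontal shift. This is the same route as the paper, and your final summary paragraph correctly flags that the Ces\`aro/telescoping identification of the constant is the genuinely delicate step.
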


\begin{proof}
A useful property of $B_n$ is that it is additive. The additivity carries over in the limit as $n\to\infty$ and for $B$ this implies that
\begin{equation}\label{eq:B}
\frac1mB(\org,\mbf)=\frac1m\sum_{j=0}^{m-1}B({\bf j},{\bf j+1}),
\end{equation}
where $B({\bf j},{\bf j+1}):=\lim_{n\to\infty}B_n({\bf j},{\bf j+1})$. Due to invariance with respect to horizontal shifts, sending $m$ to infinity in~\eqref{eq:B}, the ergodic theorem yields the almost sure limit $\E[B(\org,\ett)]$. By additivity, it only remains to identify $\E[B(\org,\ett)]$ with $-\mu$.

To this end, we rephrase $B(\org,\ett)$ as a limit of partial averages, and obtain
$$
\E[B(\org,\ett)] = \E\Big[\lim_{n\to\infty}\frac1n\sum_{j=0}^{n-1}B_j(\org,\ett)\Big]=\lim_{n\to\infty}\frac1n\sum_{j=0}^{n-1}\E[B_j(\org,\ett)],
$$
where extraction of the limit is allowed by dominated convergence, since $|B_j(\org,\ett)|\le T(\org,\ett)$. Due to invariance with respect to horizontal shifts, we have further that
$$
\E[B(\org,\ett)]=\lim_{n\to\infty}\frac1n\sum_{j=0}^{n-1}\E[T(\org,{\bf j})-T(\org,{\bf j+1})]=\lim_{n\to\infty}\frac1n\E[-T(\org,{\bf n})]=-\mu,
$$
as required.
\end{proof}

Let $\partial \Hr(n)$ denote the set of sites in $\Hr(n)$ that have at least one neighbor in $\Hl(n)$.

\begin{lma}\label{lma:negative}
There exists $\delta>0$ such that, with probability one, for all $n\ge1$ and all but finitely many $v\in\partial\Hr(0)$, we have that
$$
B_n(\org,v)<-\delta|v|\mu<0.
$$
\end{lma}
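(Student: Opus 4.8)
The plan is to combine the pointwise limit from Lemma~\ref{lma:Busemann} with the improved half-plane shape theorem~\eqref{eq:better_shape} to obtain a uniform (in $n$) estimate for $B_n(\org,v)$ on $\partial\Hr(0)$, for all but finitely many $v$. Recall that $B_n(\org,v)=T(-\nbf,\org)-T(-\nbf,v)$ and that, for fixed $v$, the sequence $\{B_n(\org,v)\}_{n\ge1}$ is nondecreasing with limit $B(\org,v)$; hence it suffices to bound $\sup_{n\ge1}B_n(\org,v)=B(\org,v)$ from above by $-\delta|v|\mu$. The point of the angle $\theta+\eps$ in the definition of $\Hr(0)$ is geometric: if $v\in\partial\Hr(0)$ lies at distance $\sim r$ from the origin, then the ray from $\org$ to $v$ makes an angle with the vertical that is at least $\theta+\eps$, which forces $\mu(v)$ — the $\mu$-norm of $v$ — to exceed $\mu\cdot(\text{vertical drop to }\org)$ by a definite multiplicative factor. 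More precisely, write $v=(a,b)$ with $b\ge0$; the defining line of $\Hr(0)$ through $(-1/2,0)$ makes angle $\theta+\eps$ with the vertical, so $v\in\Hr(0)$ gives $a+1/2\ge b\tan(\theta+\eps)$, and I will use the fact that, since $\theta$ is by definition the largest angle between a supporting line of $A$ at the rightmost point and the vertical, any direction strictly to the right of angle $\theta$ (from the vertical) sees a strictly larger $\mu$-norm than the purely vertical direction of the same height. Quantitatively: there is $\delta'>0$, depending only on $\eps$, such that $\mu(v)\ge(1+\delta')\mu\cdot|v|\,/\,\sqrt{\dots}$ — rather, more cleanly, such that $\mu(v-(-\nbf))-\mu(\org-(-\nbf))\ge \delta'\,\mu\,|v|$ uniformly in $n\ge1$, because translating both endpoints by $-\nbf$ only shears the configuration and the difference of $\mu$-norms from a far-away basepoint is governed by the relative position of $\org$ and $v$.

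Granting that geometric inequality, the analytic part is routine. Fix $\eps_0>0$ small (to be chosen as a function of $\delta'$). By~\eqref{eq:better_shape} applied with $y=-\nbf$, almost surely, for all $n\ge1$ and all but finitely many $z\in\Z\times\Z_+$,
$$
\big|T(-\nbf,z)-\mu(z+\nbf)\big|<\eps_0\max\{|z|,|z+\nbf|\}.
$$
Applying this with $z=\org$ and with $z=v$ and subtracting,
$$
B_n(\org,v)=T(-\nbf,\org)-T(-\nbf,v)\le \mu(\nbf)-\mu(v+\nbf)+\eps_0\big(|\org|+n\big)+\eps_0\max\{|v|,|v+\nbf|\}.
$$
Here the term $\mu(\nbf)-\mu(v+\nbf)=\mu(\org-(-\nbf))-\mu(v-(-\nbf))\le-\delta'\mu|v|$ by the geometric inequality above, while the error terms are, for $v\in\partial\Hr(0)$ far from the origin, bounded by a fixed multiple of $\eps_0(|v|+n)$; one must be careful that $n$ can be large relative to $|v|$. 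This is the one delicate point: when $n\gg|v|$, the naive bound $\eps_0\cdot n$ could swamp $\delta'\mu|v|$. To handle it, I will not bound $B_n$ via~\eqref{eq:better_shape} directly for such $n$, but instead use monotonicity in $n$ together with the fact that the limit $B(\org,v)$ exists a.s. and, by an argument paralleling the proof of Lemma~\ref{lma:Busemann} (additivity plus the ergodic/shape estimate), satisfies $B(\org,v)\le-\delta'\mu|v|/2$ for all but finitely many $v\in\partial\Hr(0)$; since $B_n(\org,v)\le B(\org,v)$ for every $n$, the claimed bound then holds simultaneously for all $n\ge1$ with $\delta:=\delta'/2$.

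The main obstacle is therefore making the uniformity in $n$ rigorous: $\partial\Hr(0)$ is infinite, and for each $v$ we need the bound to hold for \emph{all} $n\ge1$, not just eventually. The clean way around this is exactly the observation already highlighted in the text — monotonicity of $n\mapsto B_n(\org,v)$ — which reduces the doubly-indexed statement to the single-indexed statement about the limit $B(\org,v)$, to which~\eqref{eq:better_shape} (with a fixed far-away basepoint handled via the additivity decomposition $B(\org,v)=\sum B(\mathbf j,\mathbf{j+1})+(\text{vertical part})$, controlled as in Lemma~\ref{lma:Busemann}) and the strict convexity/geometry of $A$ apply. A secondary technical nuisance is verifying the geometric inequality $\mu(v)\ge \mu\cdot h+\delta'\mu|v|$ for $v\in\Hr(0)$ with vertical coordinate $h$: this follows from the definition of $\theta$ as the extreme supporting-line angle together with convexity of the $\mu$-norm, but one should phrase it carefully at the boundary case $\theta=\pi/4$ (diamond shape), where $\theta+\eps$ may exceed $\pi/4$ and one uses the explicit piecewise-linear structure of the norm in that direction; in all cases $\eps>0$ gives a strict gain, which is all we need.
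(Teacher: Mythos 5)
Your proof has the right skeleton: you correctly identify that uniformity in $n$ is the crux, that monotonicity of $n\mapsto B_n(\org,v)$ is the tool that addresses it, and that the argument should combine the shape theorem, the geometry of $\partial\Hr(0)$ relative to $\theta$, and the Busemann limit from Lemma~\ref{lma:Busemann}. But the actual mechanism for controlling the limit $B(\org,v)$ off-axis is left as a gesture (``an argument paralleling Lemma~\ref{lma:Busemann}'', ``the vertical part, controlled as in Lemma~\ref{lma:Busemann}''), and this is precisely where the one genuinely new idea of the paper's proof lives. Lemma~\ref{lma:Busemann} controls the \emph{horizontal} increments $B(\mathbf j,\mathbf{j+1})$ via the ergodic theorem; it says nothing about a vertical Busemann increment $B(\mbf,v)$, and an ergodic-theorem argument for such increments would require separate work and in any case would not automatically give the favourable sign.

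What the paper does instead, and what your sketch is missing, is the following two-part decomposition. Choose $\mbf=(m,0)$ with $m=m(v)\approx c|v|$ so that, by convexity of the shape and the definition of $\theta$, one has $\mu(v-\mbf)\le(1-\delta)\mu(\mbf)$ — i.e., the lateral hop from $\mbf$ to $v$ is \emph{strictly} $\mu$-cheaper than the axial hop from $\org$ to $\mbf$. Then write $B_n(\org,v)=B_n(\org,\mbf)+B_n(\mbf,v)$. The first term is increasing in $n$, hence bounded by $B(\org,\mbf)$, which by Lemma~\ref{lma:Busemann} is eventually $\le-(1-\delta^2/2)\mu(\mbf)$. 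The second term is bounded \emph{for every $n$} by the trivial triangle inequality $B_n(\mbf,v)\le T(\mbf,v)$, whose right side is $n$-free; the shape theorem in the form~\eqref{eq:better_shape} then gives $T(\mbf,v)\le(1+\delta)\mu(v-\mbf)\le(1-\delta^2)\mu(\mbf)$ for all but finitely many $v$. Summing yields $B_n(\org,v)\le-(\delta^2/2)\mu(\mbf)$, which is $\le-\delta'|v|\mu$ because $m\asymp|v|$. It is this crude bound $B_n(\mbf,v)\le T(\mbf,v)$ — not an ergodic estimate — that kills the $n$-dependence of the second term, and your writeup never invokes it. Without it, the plan to ``control the vertical part as in Lemma~\ref{lma:Busemann}'' does not go through, and your earlier direct attempt via~\eqref{eq:better_shape} with basepoint $-\nbf$ indeed fails for $n\gg|v|$, as you noted. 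So the proposal is incomplete at the one step that carries the argument.
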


\begin{proof}
Note that, by convexity of the shape and the definition of $\theta$, there exists $\delta>0$ such that for each $v\in\partial \Hr(0)$ there is $m=m(v)$ such that
$$
\mu(v-\mbf)\le(1-\delta)\mu(\mbf);
$$
see Figure \ref{fig2} (left picture).
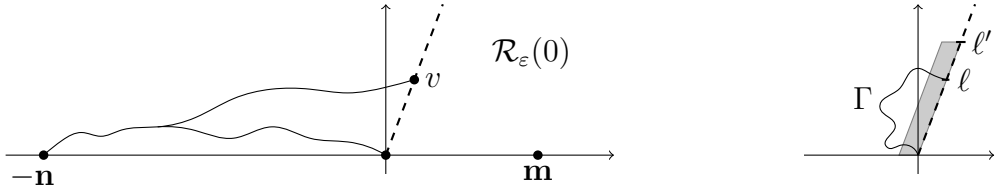
\begin{figure}[htbp]
\begin{center}
\begin{tikzpicture}[scale=.5]
\draw[->] (-10,0) -- (6,0);
\draw[->] (0,-.5) -- (0,4);
\draw[dashed,thick] (0,0) -- (1.5,4);
\filldraw[fill=gray!40!white,draw=gray] (14,0) -- (13.5,0) -- (14.625,3) -- (15.125,3) -- cycle;
\draw[->] (11,0) -- (16,0);
\draw[->] (14,-.5) -- (14,4);
\draw[dashed,thick] (14,0) -- (15.5,4);
\draw (-9,0) to [curve through={(-8.5,.4) (-8,.6) (-7.5,.5) (-7,.65) (-6,.75) (-5,1) (-4,1.5) (-3,1.75) (-2,1.75) (-1,1.65) (0,1.8)}] (.75,2);
\draw (-6,.75) to [curve through={(-5,.65) (-4,.45) (-3,.75) (-2,.45) (-1,.35)}] (0,0);
\draw (14,0) to [curve through={(13.5,.25) (13.2,.2) (13.4,.75) (13,1.25) (13.5,1.75) (14,2.3) (14.25,2.2)}] (14.75,2);
\draw (-9.3,0) node[anchor=north] {$-\mathbf{n}$};
\draw (4,0) node[anchor=north] {$\mathbf{m}$};
\draw (12,1.5) node[anchor=west] {$\Gamma$};
\draw (.75,2) node[anchor=west] {$v$};
\draw (2.5,2.7) node[anchor=west] {$\Hr(0)$};
\draw (14.75,2) node[anchor=west] {$\ell$};
\draw (15.125,3) node[anchor=west] {$\ell'$};
\fill[fill=black] (0,0) circle (.125);
\fill[fill=black] (.75,2) circle (.125);
\fill[fill=black] (-9,0) circle (.125);
\fill[fill=black] (4,0) circle (.125);
\draw[thick] (14.625,2) -- (14.825,2);
\draw[thick] (15,3) -- (15.25,3);
\end{tikzpicture}
\end{center}
\caption{Geometry in the proofs of Lemmas~\ref{lma:negative} and~\ref{lma:main}.}
\label{fig2}
\end{figure}
Indeed, $m$ can be chosen to roughly equal $c|v|$ for some $c>0$. Together with the strong version of the shape theorem stated in~\eqref{eq:better_shape}, it follows that almost surely for all but finitely many $v\in\partial \Hr(0)$ we have that
$$
B_n(\mbf,v)\le T(\mbf,v)\le(1+\delta)\mu(v-\mbf)\le(1-\delta^2)\mu(\mbf).
$$
Moreover, by monotonicity of $B_n$ and Lemma~\ref{lma:Busemann}, we have almost surely for all $n\ge1$ and large $m$ that
$$
B_n(\org,\mbf)\le B(\org,\mbf)\le-(1-\delta^2/2)\mu(\mbf).
$$
Combining the two estimates we conclude that almost surely, for all but finitely many $v\in \partial \Hr(0)$, we have for all $n\ge1$ that
\begin{equation*}
B_n(\org,v)=B_n(\org,\mbf)+B_n(\mbf,v)\le-(\delta^2/2)\mu(\mbf)<0.
\end{equation*}
Since $m$ is roughly $c|v|$ for some $c>0$, the lemma follows.
\end{proof}

\begin{proof}[Proof of Lemma~\ref{lma:main}]
Let $(x_n,y_n)$ be the point in $\Hr(0)$ with the smallest passage time to $-\nbf$. By Lemma~\ref{lma:negative}, the sequence $(y_n)_{n\ge1}$ is almost surely bounded. Fix $\ell$ large so that, with probability at least $3/4$, we have $y_n\leq \ell$ for all $n$. Then pick some finite path $\Gamma$, connecting the origin to a point in $\partial\Hr(0)$ of the form $(x,\ell+1)$, which except for its endpoints is contained in $\Hl(0)$; see Figure \ref{fig2} (right picture). Next, take $t$ large so that, with probability at least $3/4$, the total passage time $T(\Gamma)$ is at most $t$. Note that, since any path from $-\nbf$ to $(x,y)\in\partial\Hr(0)$ with $1\leq y\leq \ell$ must hit $\Gamma$ before hitting $(x,y)$, we have for all $n\geq 1$ that, on the intersection of the above two events,
$$
T(-\nbf,\org)\leq T(-\nbf,\Hr(0))+T(\Gamma)\le T(-\nbf,\Hr(0))+t.
$$
Write $U_{\ell'}$ for the set of sites $(x,y)\in \partial\Hr(0)$ with $y\geq \ell'$. Due to Lemma~\ref{lma:negative}, we may pick $\ell'\geq \ell$ such that $T(-\nbf,U_{\ell'})\geq T(-\nbf,\org)+2t$ for all $n\ge1$ with probability at least $3/4$. Define $C$ to be the intersection of all three events above. That is, let
$$
C:=\big\{y_n\leq \ell\text{ for all }n\big\}\cap \big\{T(\Gamma)\leq t\big\}\cap \big\{T(-\nbf,U_{\ell'})\geq T(-\nbf,\org)+2t\text{ for all }n\big\},
$$
and note that $\Pr(C)\ge1/4$.

Let $\Lambda_{\ell'}$ denote the set of edges connecting sites $(x,y)\in\partial\Hr(0)\setminus\{\org\}$ with $y\leq \ell'$ to sites in $\Hl(0)$; see Figure \ref{fig2} (shaded area in the right picture). We complete the proof by arguing that, on the event $C$, a configuration where the origin is the closest point in $\partial\Hr(0)$ to $-\nbf$ for all $n\geq 1$ is obtained by increasing the weight of all edges in $\Lambda_{\ell'}$ to $2t$. Indeed, the time minimizing path from $-\nbf$ to $\Hr(0)$ will then not hit a point $(x,y)\in\Hr(0)$ for $y=1,\ldots,\ell'$, since it would have reached the origin via $\Gamma$ before the last edge is traversed. It will also not hit $\Hr(0)$ for $y\geq \ell'$, since it will take at least time $2t$ from the moment when $\Gamma$ is hit to reach that level.

To formalize this, we define another i.i.d.\ family of edge weights $\{\hat\tau(e)\}$, where $\hat\tau(e)=\tau(e)$ for $e\not\in\Lambda_{\ell'}$ and where $\hat\tau(e)$ is sampled independently of $\tau(e)$ for $e\in\Lambda_{\ell'}$. Denoting by $Q$ the event $\{\hat{\tau}(e)>t\mbox{ for all }e\in\Lambda_{\ell'}\}$, and distances with respect to $\{\hat\tau(e)\}$ by $\hat T$, the above reasoning gives that
$$
\Pr\left(\hat{T}(-\nbf,\org)<\hat{T}(-\nbf,\Hr(0)\backslash \{\org\})\text{ for all }n\ge1\right)
\geq \Pr\big(C\cap Q\big)=\Pr(C)\Pr(Q)>0,
$$
due to independence of the two configurations on $\Lambda_{\ell'}$. Since the two configurations are equal in distribution, the lemma follows.
\end{proof}

\section{A two-type lemma}

The next lemma concerns the two-type process with an unbounded initial configuration. It applies when type 2 is strictly stronger than type 1, and is derived as a geometric consequence of the shape theorem. Recall that $S_k^+=\{(x,y)\in \Z^2: x\geq 0, 0\leq y\leq k\}$. Note also that for small enough values of $\eps>0$ the origin is the only site on the horizontal axis contained in $\partial\Hr(0)$.

\begin{lma}\label{lma:chance}
For every $\lambda>1$ there is $\eps>0$ such that if initially $\org$ is occupied by type 2 and all sites in $\partial\Hr(0)\backslash\{\org\}$ are occupied by type 1, then, for every $k\ge1$, there is a positive probability that type 2 occupies all initially uninfected sites in the half-strip $S_k^+$.
\end{lma}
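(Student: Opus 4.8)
The plan is to combine a deterministic geometric estimate, supplied by the shape theorem and governing the competition far from the initial type-1 set, with a local modification of the edge weights near $\partial\Hr(0)$ in the spirit of the proof of Lemma~\ref{lma:main}. Choose $\eps>0$ small, so that in particular $\org$ is the only site of $\partial\Hr(0)$ on the horizontal axis, and write $A_1:=\partial\Hr(0)\setminus\{\org\}$ for the initial type-1 configuration. The deterministic input, a convexity statement of the flavor of the one opening the proof of Lemma~\ref{lma:negative}, is that, because the boundary line of $\Hr(0)$ passes essentially through $\org$ and makes angle $\theta+\eps$, strictly larger than $\theta$, with the vertical, there are $\eta=\eta(\lambda,\eps)>0$ and a constant $C=C(\lambda,\eps)$ such that
$$
\inf_{w\in A_1}\mu(z-w)\ \geq\ (\lambda^{-1}+\eta)\,\mu(z)\qquad\text{for every }z\in S_k^+\text{ with }|z|\geq C(k+1).
$$
Informally: for a site of the half-strip far from $\partial\Hr(0)$, the fastest route for type 1 out of $A_1$ is, up to a factor $1-O(\eps)$, no shorter than the straight route for type 2 from $\org$ at unit speed, and the surplus speed $\lambda>1$ then tips the balance toward type 2.

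I would feed this into the strong half-plane shape theorem~\eqref{eq:better_shape}, applied to $T(\org,\cdot)$, to $T(w,\cdot)$ for the finitely many $w\in A_1$ of height at most $C(k+1)$ (higher points of $A_1$ being too far from $S_k^+$ to matter), and to passage times restricted to a strip $S_{k'}$ with $k'\geq k$ taken large enough that the loss $\mu^{(k')}/\mu-1$ from confining type-2 paths to $S_{k'}$ is swallowed by $\eta$ --- which is why $\eps$ must be fixed small first. This yields, almost surely, that
$$
\lambda^{-1}T^{(k')}(\org,z)\ <\ T(A_1,z)-\tfrac{\eta}{2}|z|
$$
for every $z\in S_k^+$ with $|z|\geq C(k+1)$ outside a finite random set. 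I would then restrict to the event $G$, of probability at least $3/4$, on which that finite random set, together with the finitely many sites of norm below $C(k+1)$, is contained in a fixed finite set $B\subset S_k^+$.

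It remains to modify the environment near $B$, exactly as in the proof of Lemma~\ref{lma:main}: resample, independently, the weights on the finite set $\Lambda$ of all edges incident to $B$, and let $Q$ be the event that, in the resampled environment, every edge with both endpoints in $B\cup\{\org\}$ has weight at most a small $a>0$, while every edge by which type 1 could enter $B$ --- from $A_1$, from height $k+1$, or from the half-plane to the left of $B$ --- has weight exceeding a large $b>0$. On $Q$, with $a$ small and $b$ large, type 2 floods all initially uninfected sites of $B$ before time $|\Lambda|a$, whereas type 1 needs time more than $b$ to reach $B$ at all (a detour of $B$ via its far side is ruled out by the competition in the far region, controlled next). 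A routine extremal argument then shows that on $G\cap Q$ every initially uninfected $z\in S_k^+\setminus B$ also becomes type 2: a time-minimizing type-2 path from $\org$ to $z$ lying in $\Hr(0)\cap S_{k'}$ reaches $z$ through a predecessor that is either in $B$ --- already type 2, by $Q$ --- or of strictly smaller passage time from $\org$ --- type 2 by induction --- so type 2 occupies $z$ no later than $\lambda^{-1}T^{(k')}(\org,z)$, while type 1 cannot reach $z$ before time $T(A_1,z)>\lambda^{-1}T^{(k')}(\org,z)$, since in the two-type process type 1 never outruns its own one-type passage time. Arranging, as in the proof of Lemma~\ref{lma:main}, that $G$ is measurable with respect to the weights off $\Lambda$ --- measuring the relevant passage times from the outer boundary of $B$ rather than from $\org$ --- the events $G$ and $Q$ are independent, so $\Pr(G\cap Q)=\Pr(G)\Pr(Q)>0$; since the modified environment has the law of the original one, the lemma follows.

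I expect the geometric estimate, together with the attendant confinement of the type-2 geodesics, to be the main obstacle: one must verify, for an essentially arbitrary convex shape $A$ (with $\theta$ anywhere in $[0,\pi/4]$) and uniformly in $k$, that tilting by the extra angle $\eps$ past $\theta$ genuinely buys a positive multiplicative margin over $\lambda^{-1}$ at every far-out site of the half-strip, and that the paths realizing type 2's advantage can be kept within a strip on which that advantage is maintained.
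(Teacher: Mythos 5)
Your proposal follows the same two‐ingredient strategy as the paper: a shape‐theorem comparison showing that, far from the origin, a type-2 front confined to a strip outruns type-1 started from $\partial\Hr(0)\setminus\{\org\}$, together with a finite local edge‐weight modification that hands type~2 a head start near the origin, the two pieces then being combined by a positivity argument. The paper's implementation is a bit tighter in two places, and these are exactly the places where your write‐up is soft.

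First, the paper restricts attention to the sites $(n,k)$ at the top of the strip, noting that type~1 can only enter $S_k^+$ from $\partial\Hr(0)$ or from height $k+1$; this reduces the whole competition to a comparison of two one‐dimensional passage‐time sequences and removes any need for the site‐by‐site induction over a time‐minimizing tree. Your inductive scheme can be made to work, but it needs the comparison $T(A_1,z)>\lambda^{-1}T^{(k')}(\org,z)$ to hold at \emph{every} vertex a type-2 geodesic might visit, i.e.\ on $(\Hr(0)\cap S_{k'})\setminus A_1$ rather than on $S_k^+$, and this should be stated and the constant $C$ taken to depend on $k'$. (Equivalently, one should first reduce, as the paper does, to proving the lemma for a single large $k$, after which $k'=k$ and the mismatch disappears.) Second, and more importantly, your concluding independence step is the genuine gap: you say one can ``arrange'' $G$ to be measurable off $\Lambda$ by measuring passage times from the outer boundary of $B$, but $T(A_1,z)$ is a passage time \emph{from} $A_1$, a set that meets or abuts $B$, and its minimizing path can use edges of $\Lambda$, so the promised measurability is not automatic and you do not exhibit the re‐definition. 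The paper sidesteps this cleanly: the events $D_m$, $D'_m$, $E'_{m,t}$ are not claimed to be independent of $E_{m,t}$; instead one observes that $E_{m,t}$ raises weights only on the entry edges $\Omega_m$, which is an increasing perturbation for $D_m$ and irrelevant for $D'_m\cap E'_{m,t}$, so $\Pr(D_m\cap D'_m\cap E'_{m,t}\mid E_{m,t})\ge\Pr(D_m\cap D'_m\cap E'_{m,t})$. Replacing your appeal to independence by this monotonicity observation (after sorting out $B\cap A_1$) is what is needed to close the argument.
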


\begin{proof}
Fix $\lambda>1$. Note that it suffices to prove the lemma for large $k$, since if type 2 occupies all uninfected sites in $S_k^+$, then this is trivially the case also for all $k'\leq k$. By~\eqref{mu_strip} we thus pick $k$ large so that
$$
\lambda^{-1}\mu\le\lambda^{-1}\muk<\mu.
$$
Let $\delta=(\mu-\lambda^{-1}\muk)/4$ and set $\rho=\lambda^{-1}\muk+2\delta_0$.

It follows from the half-plane shape theorem (the version stated in~\eqref{eq:better_shape}), convexity of the shape and the definition of $\Hr(0)$ that, almost surely, for large $n$ we have that
$$
T(\partial\Hr(0),(n,k))>(\mu-\delta')n
$$
for some $\delta'=\delta'(\eps)>0$, with $\delta'\to 0$ as $\eps\to 0$. Hence, for $\eps>0$ small, $\mu-\delta'\geq\rho+\delta$. Moreover, almost surely, we have that
$$
\Tk_\lambda(\org,(n,k))<(\rho-\delta)n
$$
for all large $n$. Finally, write $\tilde{T}^{\sss (k)}_\lambda(\org,(n,k))$ for the above passage time in the process based on  $\{\lambda^{-1}\tau(e)\}$, when sites in $\partial\Hr(0)$ cannot be used, and note that this clearly obeys the same asymptotics. (We assume here and in what follows that $\eps>0$ is small, so that the origin is the only site on the horizontal axis contained in $\partial\Hr(0)$.)

For $m\ge1$ now define
\begin{equation*}
\begin{aligned}
D_m&:=\{T(\partial\Hr(0),(n,k))>(\rho+\delta)n\mbox{ for all }n\geq m\},\\
D'_m&:=\{\tilde{T}^{\sss (k)}_\lambda(\org,(n,k))<(\rho-\delta)n\mbox{ for all }n\geq m\},
\end{aligned}
\end{equation*}
and pick $m$ large so that $\Pr(D_m\cap D_m')>3/4$. Let $\Omega_m$ denote the set of edges consisting of all edges connecting an initially type 1 infected site to a neighbor in $S_k^+$, and all vertical edges connecting a site $(j,k+1)$ in $\Hr(0)$ with $j\le m$ to $(j,k)$. Hence $\Omega_m$ consists of all edges up to the level $x=m$ through which type 1 can enter the strip; see Figure~\ref{fig3}.
\begin{figure}[htbp]
\begin{center}
\begin{tikzpicture}[scale=.75]
\draw[->] (-1,0) -- (9,0);
\draw [->] (0,-.5) -- (0,4);
\draw (0,2) node[anchor=east] {$k$};
\draw (7,0) node[anchor=north] {$m$};
\filldraw[fill=gray!40!white,draw=gray] (0,0) -- (0.86,2.3) -- (7,2.3) --(7,2) -- (1.1,2) -- (0.3,0) -- cycle;
\draw[dashed] (0,0) -- (1.5,4);
\draw[dashed] (0,2) -- (9,2);
\draw[dashed] (7,0) -- (7,4);
\draw (3,1) node[anchor=west] {$\Omega'_m$};
\draw (4,3) node[anchor=west] {$\Omega_m$};
\draw[ultra thin, ->] (4,3) -- (2.8,2.15);
\end{tikzpicture}
\end{center}
\caption{The regions $\Omega_m$ (shaded) and $\Omega_m'$.}
\label{fig3}
\end{figure}
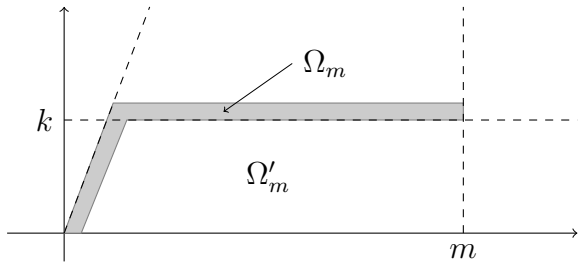
Also, let $\Omega_m'$ denote the set of edges connecting initially uninfected sites in $S_k^+$ up to level $x=m$, and note that $\Omega_m$ and $\Omega_m'$ are disjoint.

Next, let
\begin{equation*}
\begin{aligned}
E_{m,t}&:=\{\mbox{$\tau(e)>tkm$ for all $e\in\Omega_m$}\},\\
E_{m,t}'&:=\{\mbox{$\lambda^{-1}\tau(e)<t$ for all $e\in\Omega_m'$}\}.
\end{aligned}
\end{equation*}
Since $\Pr(E_{m,t}')\to 1$ as $t\to\infty$, we can pick $t$ large so that $\Pr(D_m\cap D_m'\cap E_{m,t}')>1/2$.
We claim that, on $D_m\cap D_m'\cap E_{m,t}\cap E_{m,t}'$, type 2 occupies all initially uninfected sites in $S_k^+$. To see this, note that $E_{m,t}\cap E_{m,t}'$ ensures that type 1 cannot enter the strip at a site $(j,k)$ with $j<m$, since any such site can be reached from the origin by a path in $\Omega_m'$ with weight at most $mkt$. The event $D_m\cap D_m'$ then guarantees that type 1 cannot enter the strip at a site $(j,k)$ with $j\geq m$, since type 2 is faster to all such sites once it has access to the initial piece of the strip.

It remains to prove that $\Pr(D_m\cap D_m'\cap E_{m,t}\cap E_{m,t}')>0$. To this end, write
$$
\Pr(D_m\cap D_m'\cap E_{m,t}\cap E_{m,t}')=\Pr(D_m\cap D_m'\cap E_{m,t}'|E_{m,t})\Pr(E_{m,t}).
$$
The events $D_m'$ and $E_{m,t}'$ involve only edges in $\Omega_m'$ while $E_{m,t}$ involves only edges in $\Omega_m$. Hence, since $\Omega_m'$ and $\Omega_m$ are disjoint, the conditioning on $E_{m,t}$ does not affect $D_m'$ and $E_{m,t}'$. As for $D_m$, the event $E_{m,t}$ stipulates that the passage times on certain edges are large. This clearly increases the probability of $D_m$ so that, in summary, $\Pr(D_m\cap D_m'\cap E_{m,t}'|E_{m,t})\geq \Pr(D_m\cap D_m'\cap E_{m,t}')$. The desired conclusion follows by noting that $\Pr(E_{m,t})>0$ since $\Omega_m$ is finite and $t$ fixed.
\end{proof}

\section{Proof of Theorem \ref{thm:main}}

In this section we prove Theorem \ref{thm:main}. As mentioned, there are a number of proofs in the literature showing that coexistence is possible on $\Z^2$ when $\lambda=1$, and some of these are easily adapted to show the same statement in the half-plane. However, this can also be obtained by a short argument based on Lemma~\ref{lma:main}.

\begin{proof}[Proof of the {\rm{if}}-direction of Theorem \ref{thm:main}]
Take $\lambda=1$. Let $F$ denote the event in Lemma~\ref{lma:main}, and let $\bar F$ denote its reflection in the vertical axis. Let further $\bar F_m$ denote the translate of $\bar F$ along the vector $\mbf$. We observe that, on $F$, type 1 will be first to all sites along the negative horizontal axis. Similarly, on $\bar F_1$, type 2 will be first to all sites along the positive horizontal axis. Although there is no guarantee that the intersection of the two events occurs with positive probability, since $\bar F_m$ occurs with a density (due to the ergodic theorem), we may fix $m\ge1$ so that $\Pr(F\cap\bar F_m)>0$. To guarantee coexistence, it then remains to show that, on $F\cap\bar F_m$, there is positive probability for type 2 to reach $(m,0)$ before type 1 reaches $\Hr(m)$.

Let $O$ denote the event that each edge adjacent to the origin has weight at least $\delta$, and note that $\Pr(F\cap\bar F_m\cap O)>0$ for small $\delta>0$. Let $O'$ denote the event that the sum of the weights on the edges along the axis connecting $\ett$ to $\mbf$ is at most $\delta/2$. Note that, on $O\cap O'$, type 2 will reach $\mbf$ before type 1 takes its first step. Since $F$, $\bar F_m$ and $O$ are independent of the state of the edges defining $O'$, it follows that
$$
\Pr(\coex)\geq\Pr(F\cap\bar F_m\cap O\cap O')=\Pr(F\cap\bar F_m\cap O)\Pr(O')>0,
$$
as required.
\end{proof}

We proceed with the \emph{only if}-direction, and start by combining Lemmas~\ref{lma:main} and~\ref{lma:chance} into a statement for the two-type process.

\begin{lma}\label{lma:strip}
For every $\lambda>1$ and $k\geq 1$, if type 2 occupies infinitely many sites in the two-type model on $\Z\times \Z^+$, then type 2 will almost surely occupy all but finitely many vertices in $S_k^+$.
\end{lma}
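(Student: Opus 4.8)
The plan is to derive Step~(i) of the introduction from Lemma~\ref{lma:main}, to use Lemma~\ref{lma:chance} as Step~(ii), and then to iterate the resulting flooding chance along a sparse sequence of locations on the horizontal axis. Fix $\lambda>1$ and $k\ge1$. I would choose $\eps>0$ small enough that Lemma~\ref{lma:chance} applies for this $\lambda$ and that $\org$ is the only site of the horizontal axis in $\partial\Hr(0)$, and let $\gamma>0$ be the constant provided by Lemma~\ref{lma:main} for this $\eps$. Write $\mathcal{A}$ for the event that type~2 occupies infinitely many sites; everything below is understood on $\mathcal{A}$.

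\emph{Step (i).} Following the proof of the \emph{if}-direction, I would let $F$ be the event of Lemma~\ref{lma:main}, $\bar F$ its reflection in the vertical axis, and $\bar F_m$ the translate of $\bar F$ by $\mbf$, so that $\Pr(\bar F_m)=\Pr(F)>\gamma$ for every $m$. Since the weights are i.i.d.\ and $\{\bar F_m\}_{m\ge1}$ is obtained by horizontal shifts, the ergodic theorem gives that, almost surely, $\bar F_m$ holds for a positive density of $m$, in particular for infinitely many $m$. The event $\bar F_m$ depends only on the one-type environment, but it is crafted so that on it type~1, growing from $\org$, cannot reach $\partial\Hr(m)$ before type~2, once type~2 has itself reached the level $x=m$. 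That type~2 does reach every level $x=m$ on $\mathcal{A}$ follows from survival: the set of sites eventually occupied by type~2 is infinite, hence (the graph being locally finite) unbounded and connected, and a short comparison of passage times---using $\lambda^{-1}\mu<\mu$ together with the strip constants of~\eqref{mu_strip}---excludes the possibility that this set is confined to a half-strip $\{x\le M\}$. Combining, I would conclude that for infinitely many $m$ type~2 reaches $\Hr(m)$ strictly before type~1.

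\emph{Step (ii) and iteration.} After a slight rephrasing in the spirit of the introduction's remark---arranging that on the relevant scales type~2 in fact occupies the entry site $\mbf$ while type~1 occupies at most $\partial\Hr(m)\setminus\{\mbf\}$---I would invoke the translate by $\mbf$ of Lemma~\ref{lma:chance}: awarding type~1 extra sites on $\partial\Hr(m)$ only impedes type~2, so there is a probability $p_0=p_0(\lambda,k,\eps)>0$, \emph{uniform in $m$}, that type~2 subsequently occupies every uninfected site of the shifted half-strip $\mbf+S_k^+=\{(x,y):x\ge m,\,0\le y\le k\}$. I would then run this as repeated trials along a sparse subsequence $m_1<m_2<\cdots$ of the infinitely many good scales. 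Inspecting the proof of Lemma~\ref{lma:chance}, the trial at $m_i$ succeeds on an event implied by the weights of a \emph{finite} set of edges inside $\Hr(m_i)$ together with asymptotic events that hold almost surely for all large $m_i$; once the $m_i$ are sufficiently spread out, these finite edge sets are pairwise disjoint and disjoint from the edges in $\Hl(m_i)\cup\partial\Hr(m_i)$ governing whether $m_i$ is good, so a resampling argument as in the proof of Lemma~\ref{lma:main} renders the trials genuinely independent. Each succeeds with probability at least $p_0$, so almost surely some trial, say at $m^\ast$, succeeds, and type~2 then occupies every uninfected site of $\{x\ge m^\ast,\,0\le y\le k\}$. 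As type~2 occupation is permanent, type~2 thereafter holds every vertex of $S_k^+$ except the finitely many with first coordinate below $m^\ast$ and the finitely many sites of $\partial\Hr(m^\ast)$ that stay type~1---that is, all but finitely many vertices of $S_k^+$, as claimed.

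The hard part will be Step~(i): upgrading the purely one-type Lemma~\ref{lma:main} to the two-type statement that type~2 genuinely wins the race into $\Hr(m)$ at infinitely many scales. The delicate points are that the two-type dynamics a priori allows type~1 to block type~2's fastest route before it crosses the line defining $\Hr(m)$, and that a surviving type-2 cluster might conceivably grow only upward; the reflected-translated form of Lemma~\ref{lma:main} is what controls the former near the axis, while the survival hypothesis together with the strip-constant comparison of~\eqref{mu_strip} controls the latter. By comparison, Step~(ii) is supplied directly by Lemma~\ref{lma:chance}, and the independence bookkeeping in the iteration is routine.
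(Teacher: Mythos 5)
Your high-level plan matches the paper's: combine the event from Lemma~\ref{lma:main} translated to $\mbf$ with the event from Lemma~\ref{lma:chance} translated to $\mbf$, and use an ergodic-theorem/positive-density argument. But there are two places where your proposal either overcomplicates things or leaves the genuinely difficult step unproved.

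First, the iteration. You set up a sequence of "trials" along a sparse subsequence $m_1<m_2<\cdots$ and argue for independence via resampling and edge-disjointness of finite edge sets. The paper avoids all of this: the event $F$ of Lemma~\ref{lma:main} is measurable with respect to edges touching $\Hl(0)$, while the event $G$ of Lemma~\ref{lma:chance} is measurable with respect to edges between pairs of sites in $\Hr(0)$. These edge sets are disjoint, so $F$ and $G$ are \emph{already} independent and $\Pr(F\cap G)>0$. The ergodic theorem then gives $F_m\cap G_m$ for infinitely many $m$, almost surely, and one occurrence suffices. Your proposed resampling argument also has a technical snag: the events $D_m$ and $D'_m$ in the proof of Lemma~\ref{lma:chance} depend on infinitely many edges, so the "trial at $m_i$" does not in fact reduce to a finite set of edges plus almost-sure asymptotic events in the clean way you suggest.

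Second, and more seriously, Step (i) -- which you yourself flag as the hard part -- is not actually carried out, and the route you sketch is off target. You argue from survival that type~2's occupied set cannot be confined to a half-strip $\{x\le M\}$, via a comparison of $\lambda^{-1}\mu$ with strip constants. This only yields that type~2 reaches arbitrarily large $x$-levels, which is weaker than what is needed and weaker than what is actually true. The paper uses the observation that a surviving type~2 must occupy \emph{every} site $\mbf$ on the positive horizontal axis: if type~1 ever occupied some $(m,0)$ with $m\ge1$, then the connected type-1 cluster from $\org$ to $(m,0)$ together with the axis would enclose $\ett$, trapping type~2. This is a purely topological fact, needs no passage-time estimate, and is strictly stronger than your conclusion. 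With $\mbf$ known to be type~2, the paper then gives the genuinely subtle part of the argument: taking $\Gamma$ to be the one-type geodesic from $\org$ to $\mbf$ and $v$ the first site on $\Gamma$ captured by type~2, the event $F_m$ (note: the \emph{translate} of $F$, not a reflection) guarantees that following $\Gamma$ from $v$ brings type~2 into $\Hr(m)$ at $\mbf$ before any other site of $\Hr(m)$ is infected. Your claim that "$\bar F_m$ is crafted so that type~1 cannot reach $\partial\Hr(m)$ before type~2 once type~2 has reached level $x=m$" is exactly the content that requires this geodesic argument, and you assert it rather than prove it; moreover, knowing only that type~2 has reached level $x=m$ somewhere (rather than at $\mbf$ specifically) is not enough for the geodesic argument to run. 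Finally, your use of the reflected event $\bar F_m$ rather than the translated event $F_m$ appears to be a slip: $F_m$ controls the one-type distances from axis points to the left of $\mbf$ into $\Hr(m)$, which is precisely what is needed to run the argument above; the reflected event controls distances in the opposite direction and does not obviously serve the purpose.
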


\begin{proof}
Fix $\lambda>1$ and $k\ge1$. Write $F$ for the event in Lemma~\ref{lma:main}, and let $F_m$ denote the translate of $F$ along the vector $\mbf$. Also, let $G$ denote the event in Lemma~\ref{lma:chance}, and let $G_m$ denote the translate of $G$ along the vector $(m,0)$. Each of the two events $F$ and $G$ occur with positive probability. Moreover, $F$ is determined by edges between sites in $\Hl(0)\cup \partial\Hr(0)$ involving at least one site in $\Hl(0)$, while $G$ is determined by edges between pairs of sites in $\Hr(0)$. Hence, the two events are independent and $\Pr(F\cap G)>0$. By the ergodic theorem, $F_m\cap G_m$ will occur for infinitely many $m\ge1$, almost surely.

It remains to prove that, on the event $F_m\cap G_m\cap \{\text{type 2 survives}\}$, where $m\ge1$, type 2 occupies all but finitely many vertices in $S_k^+$. For this, clearly it suffices to see that, if type 2 survives indefinitely, then $F_m$ implies that type 2 reaches $\mbf$ before any other site in $\partial\Hr(m)$ is reached by type 1. To this end, let $\Gamma$ denote the time minimizing path from the origin to $\mbf$. Note that, if type 2 survives indefinitely, then $\mbf$ must be occupied by type 2 in the two-type process. Let $v$ denote the first (in time) point on the path $\Gamma$ that is occupied by type 2 in the two-type process. The fastest way to get from $v$ to $\Hr(m)$ is to follow $\Gamma$ and, doing this, type 2 will arrive at $\mbf$ before any other site in $\Hr(m)$ is infected, as desired.
\end{proof}

The last ingredient we need in order to prove the \emph{only if}-direction of Theorem~\ref{thm:main} is a half-plane version of a result from~\cite[Proposition 2.2]{HP_absence}. More precisely, we need to show that, if type 2 conquers a wide half-strip, then type 2 will end up surrounding type 1. The argument will be similar to that of~\cite{HP_absence}, but the geometric construction is easier in our case and the proof consists of applying the ideas in Lemmas~\ref{lma:chance} and~\ref{lma:strip} in non-axis directions. We shall therefore be brief.

\begin{lma}\label{lma:speed}
For every $\lambda>1$, there is $k\ge1$ such that, if type 2 occupies all but finitely many sites in $S_k^+$, then almost surely type 1 will occupy only finitely many sites.
\end{lma}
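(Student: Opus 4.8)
The plan is to fix $k$ large (depending on $\lambda$) and argue in two stages: first upgrade the hypothesis from the half-strip $S_k^+$ to the full strip $S_k$, and then show that from that position type~2 pins type~1 inside a bounded-width vertical column and seals it off. All passage-time comparisons below are of the shape used for Step~(i)--(ii) of the introduction, and the geometric idea is exactly that of Lemmas~\ref{lma:chance} and~\ref{lma:strip} applied in the vertical direction.

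First, using \eqref{mu_strip}, fix $k$ so large that $\lambda^{-1}\muk<\mu$. On the event that type~2 occupies all but finitely many sites of $S_k^+$ there are a finite (random) $R\ge1$ and a time $t_0$ such that by time $t_0$ type~2 occupies all of $\{(x,y):x\ge R,\ 0\le y\le k\}$, in particular the rays $\{(j,0):j\ge R\}$ and $\{(j,k):j\ge R\}$. \emph{Stage~1: type~1 occupies only finitely many sites of $S_k$.} Suppose not; then type~1 reaches column $\{x=R-m\}$ for arbitrarily large $m$. Since type~1 is born at $\org$ and moves through the half-plane, it cannot occupy any site $z$ before time $T(\org,z)$, so by \eqref{eq:better_shape} together with the reflection identity $\mu_\pi=\mu$ (and $\mu_\alpha\to\mu$ as $\alpha\to\pi$) it cannot reach column $\{x=R-m\}$ before time $(\mu-\eps)m$, once $m$ is large. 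On the other hand, type~2 can cross from column $\{x=R\}$ to column $\{x=R-m\}$ inside $S_k$ in time $t_0+(\lambda^{-1}\muk+\eps)m$ by the strip shape theorem. As $\lambda^{-1}\muk<\mu$, for $\eps$ small and $m$ large type~2 wins this race, contradicting that type~1 is there. Hence type~1 has only finitely many sites in $S_k$; type~2, always growing, then fills all but finitely many of $S_k$, and, enlarging $R$ and choosing $t_1\ge t_0$, we may assume type~2 occupies $\{(x,y):|x|\ge R,\ 0\le y\le k\}$ by time $t_1$, that all type-1 sites with $y\le k$ lie in $\{|x|<R\}$, and that type~1 claims no further site at height $\le k$.

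\emph{Stage~2: the chimney.} Now type~2 occupies a cofinite piece of the line $\Z\times\{k\}$ and climbs straight up from it; applying the half-plane shape theorem in the half-plane to the right of the line $x=R$ (and its reflection on the left), type~2 reaches height $k+h$ above a given column in time roughly $t_1+\lambda^{-1}\mu_{\pi/2}h$, whereas $T(\org,z)\ge(\mu_{\pi/2}-\eps)|z|$ bounds type~1's vertical progress at inverse speed at least $\mu_{\pi/2}-\eps$. Since $\lambda>1$, comparing these rates --- separately for the columns $\{x=\pm R\}$ and for columns far out along the axis --- shows that type~1 can occupy only finitely many sites with $|x|\ge R$, so, together with Stage~1, type~1 is confined at every height to the chimney $\{|x|<R\}$, up the two sides of which type~2's walls rise strictly faster than type~1's front; the gap between them grows linearly in time. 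Choosing, for large $s$, a height $h_s$ of order $s$ inside this gap, type~2 occupies both $(R,h_s)$ and $(-R,h_s)$ well before type~1 can reach level $h_s$, and it then only needs to grow the horizontal segment of length $2R$ at height $h_s$ joining its two walls. That takes a finite random time whose law does not depend on $s$, and the segments for widely separated $s$ involve disjoint edges, so by Borel--Cantelli type~2 almost surely completes some such cap before type~1 can interfere. The resulting type-2 path --- from a point of the positive horizontal axis up one wall, across the cap, and down the other wall to the negative horizontal axis --- together with $\Z\times\{0\}$ encloses a finite region containing all type-1 sites, so type~1 occupies only finitely many sites, as claimed.

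I expect the main obstacle to be not the geometry but the two-type bookkeeping: in each comparison one must check that type-1 sites do not obstruct the type-2 path whose passage time is being estimated, and that the cap is completed \emph{almost surely} rather than merely with positive probability. Both are handled by the devices already used in Lemmas~\ref{lma:chance} and~\ref{lma:strip} --- isolating high-probability passage-time events, exploiting independence of disjoint edge sets and monotonicity in the weights, and, for the cap, the linearly growing lead of type~2's walls --- so only a sketch of these points is needed.
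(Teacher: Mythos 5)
Your approach is genuinely different from the paper's. The paper iterates a cone argument: starting from the half-strip $S_k^+$, it shows type~2 takes the $\alpha$-cone below $\ell_\alpha(0)$, then the $2\alpha$-cone, etc., each step being a race along a line $\ell_{2\alpha}^{\alpha_0}(m)$ governed by the strict inequality $\lambda^{-1}\mu_{\alpha_0+2\alpha}<\mu_{\alpha_0}\cos(2\alpha)$. Your proposal instead tries to do it in one shot: upgrade to the full strip $S_k$ (Stage~1), then confine type~1 to a bounded-width chimney whose walls type~2 climbs faster, and finally cap the chimney (Stage~2).

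The sticking point is the confinement claim in Stage~2, and I think it is a genuine gap rather than mere bookkeeping. The vertical race comparison $\lambda^{-1}\mu_{\pi/2}<\mu_{\pi/2}$ is only an asymptotic statement: the shape-theorem bounds \eqref{eq:better_shape}, together with the random head-start time $t_1$ and random width $R$ produced by Stage~1, only rule out type~1 reaching a wall site $(\pm R,h)$ for $h$ beyond some random threshold $H_0$. In the finite window $k<h\le H_0$ nothing prevents type~1 from breaking through into $\{|x|\ge R\}$, and indeed this has positive probability for any fixed $R$, $t_1$. Once type~1 is in $\{|x|\ge R,\,y>k\}$ the wall-climbing comparison and the cap argument no longer apply, because type~2's vertical passage through $\{x=\pm R\}$ is now impeded and the ``disjoint edge'' independence used for Borel--Cantelli is lost. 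So the assertion ``type~1 is confined at every height to the chimney $\{|x|<R\}$'' is not almost-sure as written, and Stage~2 collapses without it.

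The paper sidesteps exactly this issue: it never proves any almost-sure confinement. Instead, at each angular step it proves only a positive-probability event $G_m^{\sss2\alpha}$ (via the local-modification device of Lemma~\ref{lma:chance}, freezing finitely many edges and exploiting independence of disjoint edge sets), and then uses the ergodic theorem in $m$ together with $\Pr(H_m\mid\text{type 2 takes the strip})\to1$ to conclude that $H_m\cap G_m^{\sss2\alpha}$ occurs for some $m$ almost surely. That is the mechanism by which ``positive probability + translation invariance'' is upgraded to ``almost sure,'' and your proposal would need an analogous device before Stage~2 can go through. Concretely: you would want a positive-probability event, depending only on a finite set of edges near the corner $(R,k)$ (and its mirror), that seals type~1 out of $\{|x|\ge R\}$, and then invoke ergodicity over $R$ to get one almost surely where in addition type~1 has not yet breached column $R$; as written, your argument jumps straight to an almost-sure confinement that does not hold.
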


\begin{proof}
If type 2 occupies all but finitely many sites in the half-strip $S_k^+$ for $k$ sufficiently large, then the type 2 speed along the axis in $S_k^+$ will be strictly larger than the speed of type 1. As we shall see, type 2 will then be strictly faster than type 1 also in direction $\alpha$, for some small $\alpha>0$. This can be used to show that type 2 occupies all but finitely many vertices in an $\alpha$-cone. By repeating the argument we then show that type~2 will also occupy almost all sites in a $2\alpha$-cone, etc.

Recall the definition, in~\eqref{dir_mu}, of the time constant $\mu_\alpha$ in direction $\alpha$ based on unit rate exponential edge weights. The time constant in direction $\alpha$ based on exponential edge weights with parameter $\lambda$ is then given by $\lambda^{-1}\mu_\alpha$. As is well-known, the directional time constant $\mu_\alpha$ is Lipschitz continuous, since $\mu$ defines a norm. In particular, there exists a constant $c>0$ such that, for any $\alpha_0,\alpha\in[0,2\pi]$, we have that 
$$
\mu_{\alpha_0+\alpha}\leq \mu_{\alpha_0}(1+c\alpha).
$$
It follows that, uniformly in the choice of $\alpha_0$, we have $\lambda^{-1}\mu_{\alpha_0+\alpha}\leq \mu_{\alpha_0}$ if $\alpha$ is sufficiently small. By picking $\alpha$ even smaller, we further obtain that $\lambda^{-1}\mu_{\alpha_0+\alpha}<\mu_{\alpha_0}\cos\alpha$. For the remainder of this proof we fix $\alpha>0$ so that for all $\alpha_0\in[0,2\pi]$ we have
\begin{equation}\label{eq:alpha_choice}
\lambda^{-1}\mu_{\alpha_0+2\alpha}<\mu_{\alpha_0}\cos(2\alpha).
\end{equation}

Let $\ell_\alpha(0)$ denote the semi-infinite line starting at the origin with angle $\alpha$ to the horizontal axis. In a first step, we argue that if type~2 occupies all sites in a thick strip, then type 2 will almost surely occupy all but finitely many sites below the line $\ell_\alpha(0)$. Pick $k$ large so that $\lambda^{-1}\muk<\mu$, which is possible by \eqref{mu_strip}. Let $H_m$ be the event that type~2 eventually occupies the site $(m,k)$, and that at the time at which this occurs type~1 has not yet reached the vertical line $L(m)=\{(x,y)\in\mathbb{Z}^2:x=m\}$. The choice of $k$ assures that, given that type~2 captures all but finitely many sites in the strip $S_k^+$, the probability of $H_m$ tends to one as $m$ tends to infinity.

Write $\ell_{2\alpha}(m,k)$ for the semi-infinite line starting at the point $(m,k)$ with angel $2\alpha$ to the horizontal line through $(m,k)$. By~\eqref{eq:alpha_choice}, we have that $\lambda^{-1}\mu_{2\alpha}< \mu\cos(2\alpha)$, and hence that the asymptotic type 2 time from $(m,k)$ to a point on $\ell_{2\alpha}(m,k)$ far from $(m,k)$ is strictly smaller than the type 1 passage time from $L(m)$ to the same point; see Figure \ref{fig4}.
\begin{figure}[htbp]
\begin{center}
\begin{tikzpicture}[scale=.5]
\draw[->] (-5,-1) -- (9,-1);
\draw[->] (-4,-2) -- (-4,5);
\draw[dashed] (0,-1) -- (0,5);
\draw[dashed] (-4,0) -- (9,0);
\draw (-5,0) node[anchor=west] {$k$};
\draw (0,-1) node[anchor=north] {$m$};
\fill[fill=black] (0,0) circle (.125);
\fill[fill=black] (5.6,3) circle (.125);
\draw[thick] (0,0) -- (7.5,4);
\draw (7.5,4) node[anchor=north west] {$\ell_{2\alpha}(m,k)$};
\draw (1,0) arc [start angle=0, end angle=27, radius=1];
\draw (1.1,0.4) node[anchor=west] {$2\alpha$};
\draw[dashed] (0,3) -- (5.6,3);
\draw (2.2,1.8) node[anchor=west] {$x$};
\draw (1,3.3) node[anchor=west] {$x\cos(2\alpha)$};
\end{tikzpicture}
\end{center}
\caption{The line $\ell_{2\alpha}(m,k)$.}
\label{fig4}
\end{figure}
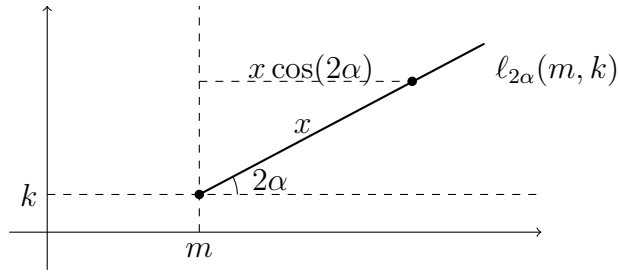
(Here, we say that a point $z\in\R^2$ is infected when the closest point in $\Z^2$ is infected.) Let $G_m^{\sss2\alpha}$ denote the event that, starting from a configuration in which $(m,k)$ is of type 2 and the rest of the line $L(m)$ is of type 1, every point along the line $\ell_{2\alpha}(m,k)$ is eventually captured by type 2. A similar argument as that used to prove Lemma~\ref{lma:chance} then shows that $G_m^{\sss2\alpha}$ occurs with positive probability.
The ergodic theorem implies that $G_m^{\sss2\alpha}$ occur for a positive density of all $m\ge1$, almost surely, and since the conditional probability that $H_m$ occurs, given that type~2 takes the strip, tends to one, their intersection will occur for some (large) value of $m$ almost surely. The occurrence of $H_m\cap G_m^{\sss 2\alpha}$ guarantees that type 2 captures the whole line $\ell_{2\alpha}(m,k)$,
and consequently that the whole area below the line $\ell_{2\alpha}(m,k)$ is captured by type 2. Since $\ell_\alpha(0)$ eventually enters this region, we conclude that if type~2 captures all but finitely many sites in $S_k^+$ (and $k$ is large), then almost surely type~2 captures all but finitely many sites in the cone below the line $\ell_\alpha(0)$.

In a second step we show that for any $\alpha_0>0$, if type 2 occupies all but finitely many vertices in the $\alpha_0$-cone below the line $\ell_{\alpha_0}(0)$, then the same is true for the $(\alpha_0+\alpha)$-cone below the line $\ell_{\alpha_0+\alpha}(0)$, almost surely. Since $\alpha_0$ is arbitrary, this will complete the proof of the lemma. We repeat the argument above, and let $v_m$ denote the point on $\ell_{\alpha_0}(0)$ at distance $m$ from the origin, write $\ell_{2\alpha}^{\alpha_0}(m)$ for the semi-infinite line starting at $v_m$ with angel $2\alpha$ to $\ell_{\alpha_0}(0)$, and let $\bar{\ell}_{\alpha_0}(m)$ be the line through $v_m$ that is orthogonal to $\ell_{\alpha_0}(0)$; see Figure \ref{fig5}.
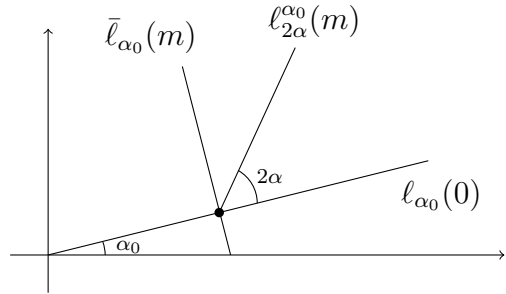
\begin{figure}[htbp]
\begin{center}
\begin{tikzpicture}[scale=.5]
\draw[->] (-1,0) -- (12,0);
\draw[->] (0,-1) -- (0,6);
\draw (0,0) -- (10,2.5);
\draw (9,1.6) node[anchor=west] {$\ell_{\alpha_0}(0)$};
\draw (4.8,0) -- (3.53,5);
\draw (4.2,5.8) node[anchor=east] {$\bar{\ell}_{\alpha_0}(m)$};
\draw (1.5,0) arc [start angle=0, end angle=20, radius=1];
\draw (1.5,0.22) node[anchor=west] {$\scriptstyle\alpha_0$};
\draw (4.5,1.125) -- (6.5,5.5);
\draw (5.5,6.2) node[anchor=west] {$\ell_{2\alpha}^{\alpha_0}(m)$};
\draw (5.5,1.375) arc [start angle=0, end angle=60, radius=1];
\draw (5.2,2.1) node[anchor=west] {$\scriptstyle2\alpha$};
\fill[fill=black] (4.5,1.125) circle (.125);
\end{tikzpicture}
\end{center}
\caption{Lines through the point $v_m$.}
\label{fig5}
\end{figure}
Now, if type 2 occupies all but finitely many vertices in the $\alpha_0$-cone, then its asymptotic speed in direction $\alpha_0$ is determined by $\lambda^{-1}\mu_{\alpha_0}$. Hence, if type~2 occupies all but finitely many vertices in the $\alpha_0$-cone, then the event $H_m^{\alpha_0}$ that type 1 has not yet reached $\bar{\ell}_{\alpha_0}(m)$ when type 2 reaches $v_m$ has probability tending to one as $m\to\infty$.

Furthermore, by~\eqref{eq:alpha_choice} it again follows that the type~2 time
from $v_m$ to a point far along the line $\ell_{2\alpha}^{\alpha_0}(m)$ is with high probability strictly smaller than the type~1 passage time from $\bar{\ell}_{\alpha_0}(m)$ to the same point. Again repeating the argument in the proof of Lemma~\ref{lma:chance}, we may show that the event $G_m^{\sss\alpha_0,2\alpha}$ that the whole line $\ell_{2\alpha}^{\alpha_0}(m)$ is captured by type 2, when starting from a configuration where $v_m$ is of type 2 and the rest of the sites on or to the left of the line $\bar{\ell}_{\alpha_0}(m)$ is of type 1, occurs with positive probability. Appealing to the ergodic theorem we again find that, given that type~2 takes all but finitely many sites in the $\alpha_0$-cone, the event $H_m^{\alpha_0}\cap G_m^{\alpha_0,2\alpha}$ will occur for some (large) $m$, almost surely, and so type~2 will occupy all but finitely many sites in the $(\alpha_0+\alpha)$-cone below the line $\ell_{\alpha_0+\alpha}(0)$. Since $\alpha_0$ was arbitrary, this completes the proof.
\end{proof}

\begin{proof}[Proof of {\rm{only if}}-direction of Theorem \ref{thm:main}]
The \emph{only if}-direction of Theorem~\ref{thm:main} is an immediate consequence of Lemmas~\ref{lma:strip} and~\ref{lma:speed}.
\end{proof}

\end{document}